\newtheorem{theorem}{Theorem}[section]
\newtheorem{proposition}[theorem]{Proposition}
\newtheorem{conjecture}[theorem]{Conjecture}
\newtheorem{lemma}[theorem]{Lemma}
\newtheorem{corollary}[theorem]{Corollary}
\newtheorem{algorithm}[theorem]{Algorithm}
\theoremstyle{definition}
\newtheorem{remark}[theorem]{Remark}
\newtheorem{definition}[theorem]{Definition}
\newtheorem{notation}[theorem]{Notation}
\numberwithin{equation}{section}
\newcommand{\R}{\mathbb{R}}
\newcommand{\Z}{\mathbb{Z}}
\newcommand{\dstirling}[2]{\genfrac{[}{]}{0pt}{0}{#1}{#2}}
\newcommand{\circleat}[3]{\filldraw[#3](#1,#2) circle (2pt)}
\DeclareMathOperator{\genfkthree}{F}
\DeclareMathOperator{\genffourk}{H}
\DeclareMathOperator{\genffourkm}{G}
\DeclareMathOperator{\genfplain}{F}
\newcommand{\createTable}[2]{
    \def\n{#1}
    \begin{tikzpicture}[scale =0.7]
        \foreach \x in {0,...,\n} {
            \foreach \y in {0,...,\n} {
                \node[draw, minimum size=0.7cm] at (\x, \y) {};
            }
        }
        \foreach \m/\x/\y in {#2} {
            \node at (\x, \y) {\m};
        }
    \end{tikzpicture}
}
\DeclareMathOperator{\area}{area}
\DeclareMathOperator{\bounce}{bounce}
\newtheorem*{rep@theorem}{\rep@title}\newcommand{\newreptheorem}[2]{%
\newenvironment{rep#1}[1]{%
\def\rep@title{\bf #2 \ref{##1}}%
\begin{rep@theorem}}%
{\end{rep@theorem}}}
\newcommand{\ourdef}[1]{\emph{#1}}
\begin{document}


\title{Polyhedral geometry of refined $q,t$-Catalan numbers}

\author{Matthias Beck, Mitsuki Hanada, Max Hlavacek, John Lentfer, \\ Andr\'es R. Vindas-Mel\'endez, Katie Waddle }
\address{\scriptsize{Department of Mathematics, San Francisco State University\\
\url{https://matthbeck.github.io/}}}
\email{\scriptsize{mattbeck@sfsu.edu}}

\address{\scriptsize{Department of Mathematics, University of California, Berkeley\\
\url{https://math.berkeley.edu/~mhanada}}}
\email{\scriptsize{mhanada@berkeley.edu}}

\address{\scriptsize{Department of Mathematics \& Statistics, Pomona College\\
\url{https://math.berkeley.edu/~mhlava}}}
\email{\scriptsize{magda.hlavacek@pomona.edu}}

\address{\scriptsize{Department of Mathematics, University of California, Berkeley\\
\url{https://math.berkeley.edu/~jlentfer}}}
\email{\scriptsize{jlentfer@berkeley.edu}}

\address{\scriptsize{Department of Mathematics, Harvey Mudd College}, \url{https://math.hmc.edu/arvm}}
\email{\scriptsize{arvm@hmc.edu}}

\address{\scriptsize{Department of Mathematics, University of Michigan\\
\url{https://sites.google.com/view/katie-waddle}}}
\email{\scriptsize{waddle@umich.edu}}

\date{30 July 2024}

\subjclass{05A15 (Primary), 52B20 (Secondary)}

\begin{abstract}
We study a refinement of the $q,t$-Catalan numbers introduced by Xin and Zhang (2022, 2023) using tools from polyhedral geometry. 
These refined $q,t$-Catalan numbers depend on a vector of parameters $\vec{k}$ and the classical $q,t$-Catalan numbers are recovered when $\vec{k} = (1,\ldots,1)$.
We interpret Xin and Zhang's generating functions by developing polyhedral cones arising from constraints on $\vec{k}$-Dyck paths and their associated area and bounce statistics.
Through this polyhedral approach, we recover Xin and Zhang's theorem on
$q,t$-symmetry of the refined $q,t$-Catalan numbers in the cases where $\vec{k}
= (k_1,k_2,k_3)$ and $(k,k,k,k)$, give some extensions, including the case $\vec{k} = (k,k+m,k+m,k+m)$, and discuss
relationships to other generalizations of the $q,t$-Catalan numbers.
\end{abstract}

\maketitle


\section{Introduction}\label{sec:Introduction}

The $q,t$-Catalan numbers are a two-parameter deformation of the well-known Catalan numbers $\frac{1}{n+1}\binom{2n}{n}$. 
They were first introduced by Haiman in \cite{Haiman} as the bigraded Hilbert series of the sign representation of a certain $S_n$-module called the diagonal coinvariants.
An elegant way to write down the $q,t$-Catalan numbers, due to Garsia and Haglund \cite{GarsiaHaglund}, is
\begin{equation}\label{eq:qtcatalanintro}
    C_n(q,t) = \sum_{D \in \mathcal{D}_n} q^{\area(D)} \, t^{\bounce(D)},
\end{equation}
where $\mathcal{D}_n$ is the set of Dyck paths of length $n$ and area and
bounce are certain statistics on Dyck paths; we give details below in Section~\ref{sec:Background}.
We can immediately observe the symmetry of $q$ and $t$ from Haiman's original definition of $C_n(q,t)$.
A famous open problem asks for a bijection on Dyck paths that interchanges area and bounce, which would provide a combinatorial proof of $q,t$-symmetry.
For more on the history and equivalent definitions of $q,t$-Catalan numbers, see \cite[Chapter 3]{Haglund} or \cite[Section 1.5]{Loehr}.

We study a refinement $C_{\vec{k}}(q,t)$ of the $q,t$-Catalan numbers introduced by Xin and Zhang~\cite{XinZhang}.
Here $\vec{k}$ is a vector with positive integer entries that sum to $n$, and we take the sum in~\eqref{eq:qtcatalanintro} only over those Dyck paths whose north steps are precisely the entries of $\vec{k}$.
We recover $C_n(q,t)$ as the special case where $\vec{k}$ has dimension $n$ and all of its entries are~1.
It is a natural question to ask in which cases $C_{\vec{k}}(q,t)$ is symmetric in $q$ and $t$. Xin and Zhang proved symmetry when $k$ is two- or three-dimensional, while giving a counterexample in dimension four~\cite{XinZhang, xin2022qtsymmetry}.
They also proved symmetry for the case that $\vec k$ has four equal entries.
Their proofs are constructive and involved: they compute, via MacMahon's Omega operator, the rational generating function of $C_{\vec{k}}(q,t)$, from which the symmetry is plainly visible; for $\vec{k} \in \Z_{ >0 }^3$, they also give an intricate bijective proof.
One of our goals is to interpret (and re-derive) Xin--Zhang's generating functions
\begin{equation}\label{eq:intro-F}
    \genfkthree(x_1, x_2, x_3, q, t) := \sum_{ \vec k \in \Z_{ \ge 0 }^3 } C_{\vec{k}}(q,t) \, x_1^{ k_1 } x_2^{ k_2 } x_3^{ k_3 } 
\end{equation}
and
\begin{equation}\label{eq:intro-H}
  \genffourk(x, q, t) := \sum_{ k \in \Z_{ \ge 0 } } C_{(k,k,k,k)}(q,t) \, x^k
\end{equation}
through the use of polyhedral geometry: we develop polyhedral cones and certain subdivisions which appear organically from the (linear) constraints defining Dyck paths and their area and bounce statistics. 
It turns out these cones are arithmetically nice: their integer lattice-point structure are either unimodular or close to unimodular, and so their generating functions are manageable to compute.

Xin and Zhang were primarily interested in the $q,t$-symmetry of 
\[
C_{\lambda}(q,t)=\sum_{\mu(\vec{k}) = \lambda}C_{\vec{k}}(q,t),\]
where $\mu(\vec{k}) = \lambda$ means that rearranging the entries of $\vec{k}$ in decreasing order gives the partition $\lambda$.
In general, if all $C_{\vec{k}}(q,t)$ are $q,t$-symmetric for a fixed $\mu(\vec{k}) = \lambda$, then $C_{\lambda}(q,t)$ is $q,t$-symmetric.
It is also possible for $C_{\lambda}(q,t)$ to be $q,t$-symmetric even if some of the $C_{\vec{k}}(q,t)$ summands used in its definition are not; for example, 
\[C_{\lambda = (1,1,1,2)}(q,t) = C_{\vec{k} = (1,1,1,2)}(q,t) + C_{\vec{k} = (1,1,2,1)}(q,t) + C_{\vec{k} = (1,2,1,1)}(q,t) + C_{\vec{k} = (2,1,1,1)}(q,t)\] is $q,t$-symmetric, but individually, $C_{\vec{k} = (1,1,2,1)}(q,t)$ and $ C_{\vec{k} = (1,2,1,1)}(q,t)$ are not. 
It is unclear how common this symmetry phenomenon is; starting with partitions
$\lambda$ of length 4 many $C_\lambda$ are not symmetric in $q,t$.

After giving background on both $q,t$-Catalan numbers and the integer-point structure of polyhedral cones in Section~\ref{sec:Background}, we compute $\genfkthree(x_1, x_2, x_3, q, t)$ in Section~\ref{sec:k_1k_2k_3} and $\genffourk(x, q, t)$ in Section~\ref{sec:k^4}. 
Naturally, this geometric \emph{ansatz} can be used in other situations, as we exhibit
this in Section~\ref{sec:kaaa} by computing
\begin{equation}\label{eq:gfctkaaa}
  \genffourkm(x, y, q, t) := \sum_{ k, m \in \Z_{ \ge 0 } } C_{(k,k+m,k+m,k+m)}(q,t) \, x^k \, y^m ,
\end{equation}
realizing symmetry again along the way. We conjecture that $C_{(k,a,\ldots,a)}(q,t)$ is
$q,t$-symmetric for any positive $k$ and $a$.
We conclude in Section~\ref{sec:further} with various extensions of our work,
including a proof that \[ C_{\vec{k} = (k_1,\ldots,k_j,m)}(q,t) = C_{\vec{k} =
(k_1,\ldots,k_j,l)}(q,t) \, . \]
Finally, we discuss the relationship between the refined $q,t$-Catalan numbers and other generalizations of the $q,t$-Catalan numbers. 


\section{Background}\label{sec:Background}

\subsection{\texorpdfstring{$q,t$}{q,t}-Catalan numbers}

The \ourdef{Catalan numbers}~$C_n = \frac{1}{n+1} \binom{2n}{n}$ are well known to have numerous combinatorial interpretations (see, for example,~\cite{Stanley}) and  satisfy the recurrence relation 
\[C_n = \sum_{k=1}^n C_{k-1} C_{n-k} \, , \]
for~$n\geq 1$, with the initial condition~$C_0=1$.
There is not always a canonical way to create a~$q$-analogue of a combinatorial
sequence, and indeed,
the Catalan numbers have two well-known $q$-analogues. 
One of them is the \ourdef{Carlitz--Riordan~$q$-Catalan numbers}~$C_n(q)$, which are defined by the recurrence relation 
\[C_n(q) = \sum_{k=1}^n q^{k-1} \, C_{k-1}(q) \, C_{n-k}(q) \, , \]
for~$n\geq 1$, with the initial condition~$C_0(q) = 1$. 
The second is the \ourdef{MacMahon~$q$-Catalan numbers} $D_n(q)$, which are
defined via the usual $q$-integers and $q$-binomial coefficients by 
\[D_n(q) = \frac{1}{[n+1]_q}\dstirling{2n}{n}_q \, . \]

Based on computational data, Haiman first observed that the bigraded Hilbert series of the sign representation of the diagonal coinvariants was a~$q,t$-deformation of the Catalan numbers \cite{Haiman}. 
Haiman first defined the $q,t$-Catalan numbers by $C_n(q,t) := \langle e_n, \nabla e_n \rangle$, where $\nabla$ is a certain Macdonald eigenoperator for the Macdonald symmetric functions.
These~$q,t$-Catalan numbers~$C_n(q,t)$ were further studied in \cite{GarsiaHaiman}, where the first combinatorial formula was presented.
Several conjectured properties were then proven in \cite{Haiman-tq, Haiman-vanishing}.

Garsia and Haiman \cite{GarsiaHaiman} showed that their combinatorial construction of the~$q,t$-Catalan numbers~$C_n(q,t)$ included both the Carlitz-Riordan and MacMahon~$q$-Catalan numbers as specializations:
\[
C_n(q) = C_n(q,1) = C_n(1,q) \qquad  \text{ and } \qquad D_n(q) = C_n(q, \tfrac 1
q) \, q^{\binom{n}{2}} \, . \]
Garsia and Haiman's combinatorial formula for~$C_n(q,t)$ was not obviously a polynomial in~$\Z[q,t]$, but a rational function of polynomials. 
Later, an elegant characterization of the~$q,t$-Catalan numbers, which made polynomiality clear, was conjectured by Haglund \cite{HaglundBounce} and proven by Garsia and Haglund \cite{GarsiaHaglund} in terms of two statistics on Dyck paths, namely, area and bounce.

\begin{definition}
A \ourdef{Dyck path} is a lattice path on the lattice~$\Z^2$ that starts at~$(0,0)$, uses only~$(0,1)$ and~$(1,0)$ as steps (called north and east, respectively), ends at~$(n,n)$, and never goes below the line~$y=x$. 
Denote the set of all Dyck paths from~$(0,0)$ to~$(n,n)$ by~$\mathcal{D}_n$. 
For a Dyck path~$D\in \mathcal{D}_n$ we define~$\area(D)$ to be the number of complete cells between the Dyck path and the line~$y=x$.  
\end{definition}

\begin{definition}
For~$D\in \mathcal{D}_n$, the statistic~$\bounce(D)$ is due to Haglund \cite{HaglundBounce} and is defined by the following algorithm. 
Begin at~$(0,0)$ and travel north until an east step of~$D$ is encountered. 
Turn at a right angle to the east and travel east until the line~$y=x$ is reached. 
Then turn north and travel until an east step of~$D$ is encountered and repeat until~$(n,n)$ is reached. 
This defines the bounce path of~$D$. 
The bounce path ``bounces'' off the line~$y=x$ at positions~$(0,0), (j_1, j_1), (j_2, j_2), \ldots, (j_b, j_b) = (n,n)$. 
Then define 
\[ \bounce(D) := \sum_{i=1}^{b-1} n - j_i \, .\]
\end{definition}

For the purposes of this paper, we take the following as the definition of the~$q,t$-Catalan numbers~\cite{GarsiaHaglund}:
\begin{theorem}[Garsia--Haglund]
The~$q,t$-Catalan numbers are given by
    \[C_n(q,t) = \sum_{D \in \mathcal{D}_n} q^{\area(D)} \, t^{\bounce(D)}.\]
\end{theorem}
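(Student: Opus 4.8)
The identity equates an algebraic quantity, $C_n(q,t)=\langle e_n,\nabla e_n\rangle$, with a combinatorial one, so the plan is to exhibit a single recursion in $n$ satisfied by both sides and then conclude by induction from the common initial value. Write $\widetilde{C}_n(q,t):=\sum_{D\in\mathcal D_n}q^{\area(D)}\,t^{\bounce(D)}$ for the right-hand side; since $\widetilde{C}_0=1=C_0$, it is enough to produce one recurrence obeyed by each of $\widetilde{C}_n$ and $C_n$.

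For the combinatorial side I would use the bounce decomposition. For $D\in\mathcal D_n$ the bounce path touches the diagonal at $0=j_0<j_1<\cdots<j_b=n$, so the gaps $a_i:=j_i-j_{i-1}$ form a composition of $n$ and $\bounce(D)=\sum_{i=1}^b(i-1)a_i$. Conditioning on the first gap $a_1=r$ forces $D$ to begin with $r$ north steps and leaves, once the first bounce segment is removed and the indices are shifted, a Dyck path $D'\in\mathcal D_{n-r}$; the reindexing $i\mapsto i-1$ lowers the bounce by exactly $\sum_{i\ge 2}a_i=n-r$, so $\bounce(D)=(n-r)+\bounce(D')$, while the area splits into the $\binom{r}{2}$ cells below the first segment, the area of $D'$, and an interleaving contribution recorded by a Gaussian binomial. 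Carrying out this (routine) bookkeeping yields Haglund's recursion
\[
\widetilde{C}_n(q,t)\;=\;\sum_{r=1}^{n}t^{\,n-r}\,q^{\binom{r}{2}}\,\dstirling{n-1}{r-1}_q\,\widetilde{C}_{\,n-r}(q,t),
\]
which one checks determines $\widetilde{C}_n$ uniquely from $\widetilde{C}_0=1$ (for instance it recovers $\widetilde{C}_2=q+t$ and $\widetilde{C}_3=q^3+q^2t+qt^2+t^3+qt$).

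The crux is to prove that the algebraic side obeys the very same recursion, and this is where essentially all the difficulty concentrates. Here I would expand $e_n$ in the modified Macdonald basis $\{\tilde H_\mu\}_{\mu\vdash n}$ and use that $\nabla$ acts diagonally, $\nabla\tilde H_\mu=T_\mu\tilde H_\mu$ for an explicit monomial $T_\mu$ in $q,t$; this turns $\langle e_n,\nabla e_n\rangle$ into the classical explicit sum over partitions $\mu\vdash n$ whose summands are products of $(q,t)$-hook factors weighted by $T_\mu$. The heart of the Garsia--Haglund argument is then to transform this partition sum into the bounce recursion above, which requires the plethystic calculus for $\nabla$, the Macdonald--Pieri rule together with Macdonald (Koornwinder) reciprocity, and in practice the introduction of auxiliary creation-type operators whose iterated action reproduces the bounce-composition structure term by term. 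Matching the two recursions and invoking the base case $C_0=1$ closes the induction; as a bonus, the polynomiality and positivity in $\Z[q,t]$ that originally motivated the conjecture become immediate from the combinatorial side once the identity is known. I expect this algebraic recursion --- not the combinatorial one --- to be the genuine obstacle.
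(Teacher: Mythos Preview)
The paper does not prove this theorem at all: it quotes the result from Garsia--Haglund, cites \cite{GarsiaHaglund}, and explicitly adopts the formula as its working \emph{definition} of $C_n(q,t)$. So there is no in-paper argument to compare against; any correct proof you supply would necessarily go beyond what the paper does.

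That said, your sketch has a concrete error. The one-variable recursion you write down,
\[
\widetilde C_n(q,t)=\sum_{r=1}^{n} t^{\,n-r}\,q^{\binom{r}{2}}\,\dstirling{n-1}{r-1}_q\,\widetilde C_{n-r}(q,t),
\]
is false already at $n=4$: summing the four terms produces $C_4(q,t)+q^3t^3$, not $C_4(q,t)$. The underlying combinatorial mistake is the claim that removing the first bounce segment leaves ``a Dyck path $D'\in\mathcal D_{n-r}$'' whose bounce is $\bounce(D)-(n-r)$ and whose area interacts with the removed piece only through a single $q$-binomial. The point $(r,r)$ where the bounce path first returns to the diagonal need not lie on $D$, and the tail of $D$ above that point is not simply a shifted Dyck path with independently computable bounce; the second bounce height depends on how the first block of east steps interlaces with the next run of north steps.

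Haglund's actual recursion---the one Garsia--Haglund match on the algebraic side---is a \emph{two-parameter} recursion: one refines by the length $s$ of the initial north run (equivalently, the first bounce height) and sets $F_{n,s}(q,t)=\sum_{D:\,j_1=s}q^{\area(D)}t^{\bounce(D)}$, obtaining
\[
F_{n,s}(q,t)=t^{\,n-s}\,q^{\binom{s}{2}}\sum_{r\ge 0}\dstirling{r+s-1}{r}_q F_{n-s,\,r}(q,t),
\]
with $C_n=\sum_s F_{n,s}$. The extra index is exactly what records the interleaving you tried to absorb into a single Gaussian binomial. Your description of the algebraic side (plethystic calculus, Macdonald--Pieri, reciprocity, auxiliary operators) is accurate in spirit and is indeed where the real work lies, but the induction only closes once you match this refined recursion, not the collapsed one.
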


The symmetry of $C_n(q,t)$ follows immediately from Haiman's original
definition of the $q,t$-Catalan numbers. 
Thus, we obtain the following as a consequence.

\begin{corollary} For any nonnegative integer~$n$,
    \[
\sum_{D \in \mathcal{D}_n} q^{\area(D)} \, t^{\bounce(D)} = \sum_{D \in \mathcal{D}_n} q^{\bounce(D)} \, t^{\area(D)}.
\]
\end{corollary}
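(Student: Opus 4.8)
The plan is to deduce the identity directly from the two facts already on the table: the Garsia--Haglund combinatorial formula $C_n(q,t) = \sum_{D \in \mathcal{D}_n} q^{\area(D)} \, t^{\bounce(D)}$, and Haiman's original algebraic definition, from which the $q,t$-symmetry $C_n(q,t) = C_n(t,q)$ is transparent. First I would reduce the claim to this symmetry. By the Garsia--Haglund theorem the left-hand side is exactly $C_n(q,t)$. For the right-hand side I would simply reindex the monomials, observing that $\sum_{D \in \mathcal{D}_n} q^{\bounce(D)} \, t^{\area(D)}$ is obtained from the defining sum for $C_n(q,t)$ by interchanging the formal variables $q$ and $t$; that is, the right-hand side equals $C_n(t,q)$. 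Thus the corollary is equivalent to the single assertion $C_n(q,t) = C_n(t,q)$.

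For that assertion I would appeal to Haiman's definition $C_n(q,t) = \langle e_n, \nabla e_n \rangle$, equivalently the bigraded Hilbert series of the sign-isotypic component of the diagonal coinvariant ring $DR_n$ in the two alphabets $x_1,\dots,x_n$ and $y_1,\dots,y_n$. Because $DR_n$ is defined symmetrically in these two alphabets, the linear involution swapping the $x$'s with the $y$'s is an $S_n$-equivariant automorphism that carries a bihomogeneous component of bidegree $(r,s)$ to one of bidegree $(s,r)$. Restricting to the sign-isotypic part and taking dimensions shows that the coefficient of $q^r t^s$ in $C_n(q,t)$ equals the coefficient of $q^s t^r$, i.e.\ $C_n(q,t) = C_n(t,q)$. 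At the level of symmetric functions the same conclusion follows from $\nabla$ acting diagonally on the modified Macdonald basis $\tilde{H}_\mu[X;q,t]$ with eigenvalue $T_\mu$, together with the conjugation symmetry $\tilde{H}_\mu[X;q,t] = \tilde{H}_{\mu'}[X;t,q]$, where $\mu'$ denotes the transpose partition.

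Given these inputs there is no genuine obstacle: the corollary is a one-line consequence of the reindexing plus the symmetry, so the entire depth is outsourced to the two results we are assuming---the nontrivial equality between the algebraic and combinatorial descriptions of $C_n(q,t)$, and the manifest $q,t$-symmetry of the former. I would therefore present the proof as nothing more than the variable-swap reindexing of the Garsia--Haglund sum, citing Haiman for the symmetry $C_n(q,t) = C_n(t,q)$. It is worth emphasizing that this argument gives \emph{no} combinatorial bijection interchanging $\area$ and $\bounce$; exhibiting such a bijection remains the famous open problem noted in the introduction.
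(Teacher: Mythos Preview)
Your proposal is correct and matches the paper's approach exactly: the paper likewise deduces the corollary by combining the Garsia--Haglund formula with the $q,t$-symmetry that is immediate from Haiman's original algebraic definition. If anything, you supply more detail (the alphabet-swap on $DR_n$, the Macdonald conjugation symmetry) than the paper, which simply states that the symmetry ``follows immediately from Haiman's original definition.''
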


A notable open problem \cite[Open Problem 3.11]{Haglund} is to find a bijection on Dyck paths that interchanges area and bounce, which would prove the~$q,t$-symmetry in a combinatorial way.
Figure~\ref{fig:C_3(q,t)} provides an example of the calculation of $C_3(q,t)$, highlighting the aforementioned terminology and $q,t$-symmetry.  

\begin{figure}[ht]
    \centering
\begin{tikzpicture}[scale=0.8] 
    \foreach \x/\y in {0/1, 0/2, 1/2}{
      \fill[red!40] (\x,\y) rectangle +(1,1);
    }
    \draw[help lines] (0,0) grid (3,3); 
    \draw[dashed] (0,0) -- (3,3); 
    \coordinate (current) at (0,0);
    \draw[line width=2pt] (current) foreach \dir in {1,1,1,0,0,0}{ 
      -- ++(\dir*90:1) coordinate (current)
    };
    \coordinate (current) at (0.1,-0.1);
    \draw[blue, line width=1pt, dashed] (current) foreach \dir in {1,1,1,0,0,0}{ 
      -- ++(\dir*90:1) coordinate (current)
    };
  \end{tikzpicture}
\begin{tikzpicture}[scale=0.8] 
    \foreach \x/\y in {0/1, 1/2}{
      \fill[red!40] (\x,\y) rectangle +(1,1);
    }
    \draw[help lines] (0,0) grid (3,3); 
    \draw[dashed] (0,0) -- (3,3); 
    \coordinate (current) at (0,0);
    \draw[line width=2pt] (current) foreach \dir in {1,1,0,1,0,0}{ 
      -- ++(\dir*90:1) coordinate (current)
    };
    \coordinate (current) at (0.1,-0.1);
    \draw[blue, line width=1pt, dashed] (current) foreach \dir in {1,1,0,0,1,0}{ 
      -- ++(\dir*90:1) coordinate (current)
    };
  \end{tikzpicture}
\begin{tikzpicture}[scale=0.8] 
    \foreach \x/\y in {0/1}{
      \fill[red!40] (\x,\y) rectangle +(1,1);
    }
    \draw[help lines] (0,0) grid (3,3); 
    \draw[dashed] (0,0) -- (3,3); 
    \coordinate (current) at (0,0);
    \draw[line width=2pt] (current) foreach \dir in {1,1,0,0,1,0}{ 
      -- ++(\dir*90:1) coordinate (current)
    };
    \coordinate (current) at (0.1,-0.1);
    \draw[blue, line width=1pt, dashed] (current) foreach \dir in {1,1,0,0,1,0}{ 
      -- ++(\dir*90:1) coordinate (current)
    };
  \end{tikzpicture}
\begin{tikzpicture}[scale=0.8] 
    \foreach \x/\y in {1/2}{
      \fill[red!40] (\x,\y) rectangle +(1,1);
    }
    \draw[help lines] (0,0) grid (3,3); 
    \draw[dashed] (0,0) -- (3,3); 
    \coordinate (current) at (0,0);
    \draw[line width=2pt] (current) foreach \dir in {1,0,1,1,0,0}{ 
      -- ++(\dir*90:1) coordinate (current)
    };
    \coordinate (current) at (0.1,-0.1);
    \draw[blue, line width=1pt, dashed] (current) foreach \dir in {1,0,1,1,0,0}{ 
      -- ++(\dir*90:1) coordinate (current)
    };
  \end{tikzpicture}
\begin{tikzpicture}[scale=0.8] 
    \foreach \x/\y in {}{
      \fill[red!40] (\x,\y) rectangle +(1,1);
    }
    \draw[help lines] (0,0) grid (3,3); 
    \draw[dashed] (0,0) -- (3,3); 
    \coordinate (current) at (0,0);
    \draw[line width=2pt] (current) foreach \dir in {1,0,1,0,1,0}{ 
      -- ++(\dir*90:1) coordinate (current)
    };
    \coordinate (current) at (0.1,-0.1);
    \draw[blue, line width=1pt, dashed] (current) foreach \dir in {1,0,1,0,1,0}{ 
      -- ++(\dir*90:1) coordinate (current)
    };
  \end{tikzpicture}
    \caption{The five Dyck paths on a~$3 \times 3$ grid, used to calculate $C_3(q,t) = q^3 + q^2t + qt + qt^2 + t^3$. 
    The red boxes contribute to area. 
    The dashed blue path represents the bounce path. 
    From left to right, the area statistic is~$3,2,1,1,0$ and the bounce statistic is~$0,1,1,2,3$. }
    \label{fig:C_3(q,t)}
\end{figure}
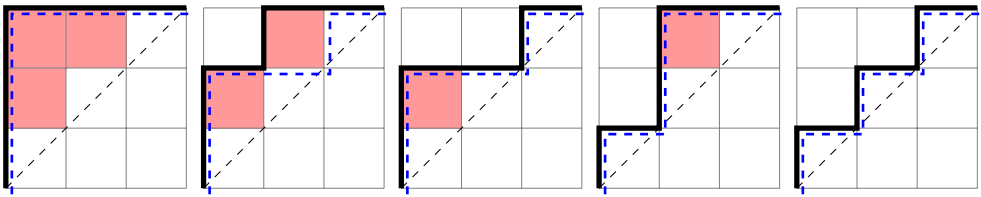

\subsection{\texorpdfstring{$\vec{k}$-Dyck paths}{k-Dyck paths}}

Xin and Zhang refined the~$q,t$-Catalan numbers by modifying Haglund's area-bounce formula \cite{XinZhang}. 
They provided three equivalent models which can be used to define a~$\vec{k}$-Dyck path. 
Here, we recall their first model.

\begin{definition}
Let~$\vec{k}=(k_1,\dots,k_m)$ be a vector of positive integers which sum to~$n$. 
A \textit{$\vec{k}$-Dyck path} is a lattice path in~$\Z^2$ from~$(0,0)$ to~$(n,n)$, which never goes below the diagonal~$y=x$, such that the north steps are of length~$k_i$, for~$1\leq i \leq m$, from bottom to top. 
The east steps are of length 1 without additional restriction. 
Denote the set of all $\vec{k}$-Dyck paths by $D_{\vec{k}}$.
Define the \textit{rank} of a point~$(x,y)$ of a~$\vec{k}$-Dyck path as~$y-x$. 
\end{definition}

\begin{remark}
In general, a~$\vec{k}$-Dyck path encodes more information than a standard Dyck path because it also includes information about how long the north steps are. 
For example, consider a~$\vec{k}$-Dyck path that contains a north step of length~$k_i$ immediately followed by a north step of length~$k_{i+1}$: if one considers this as a standard Dyck path by sending north steps of length~$k_i$ to~$k_i$ north steps of length 1, we cannot then recover the lengths~$k_i, k_{i+1}$. 
\end{remark}

We recover the~$k$-Dyck paths studied by Loehr \cite{Loehr} as a special case by using the vector~$\vec{k} = (k,k,\ldots,k)$ (and shearing the~$k$-Dyck path as necessary so that the slope of the main diagonal becomes~$1$).
We can also recover the standard Dyck paths as a special case by using the vector~$\vec{k} = (1,1,\ldots,1)$.
Xin and Zhang also extended the definition of the area and bounce of a Dyck path to~$\vec{k}$-Dyck paths (such that they are compatible with the definitions of area and bounce for $k$-Dyck paths). 
To describe area for a $\vec{k}$-Dyck path, we need an auxiliary definition~\cite[Section 2.1]{XinZhang}.

\begin{definition}\label{def:red_ranks}
Let~$D\in D_{\vec{k}}$ be a~$\vec{k}$-Dyck path. 
Define the \ourdef{red ranks} of a~$\vec{k}$-Dyck path to be the sequence~$(r_1,\ldots,r_n)$, where~$r_i$ is the rank of the starting point of the north step corresponding to~$k_i$.
\end{definition}

The red ranks can also be computed by counting complete lattice cells between the path and the line~$y=x$. 
Specifically, $r_i$ is the number of such cells in the row whose bottom left corner is the south end of the start of the~$k_i$ set of north steps.
A~$\vec{k}$-Dyck path is uniquely determined by its red ranks since they determine the location of all north steps. 
Note that the first red rank~$r_1$ is always equal to~$0$.

\begin{definition}
For~$D\in D_{\vec{k}}$, define the statistic area to be the sum of its red ranks, that is,
\[\area(D) := r_1+\cdots+r_n \, .\]
\end{definition}

We recall from \cite[Section 2.3]{XinZhang} the algorithm to compute bounce for a $\vec{k}$-Dyck path. 
For most of this paper, with the exception of Section~\ref{sec:further}, it suffices use a few special cases.

\begin{algorithm}[Xin-Zhang]\label{alg: bounce}
Input: A $\vec{k}$-Dyck path $D \in D_{\vec{k}}$.

\begin{itemize}
    \item Set $i=0$.
    \item Let $R$ be a diagram with $k_j+1$ cells in column $j$.
    \item Set $P_0 := (0,0)$ and define $R^0 := R$ to be the empty diagram.
    \item While $P_i \neq (|\vec{k}|,|\vec{k}|)$, do the following:
    \begin{enumerate}[(1)]
        \item Start a vertical path from $P_i$ towards the north, and let $v_i$ be the number of north steps traversed before reaching the starting point $Q_i = (x_i,y_i)$ of an east step;
        \item To obtain $R^{i+1}$, write $i$ in the first row of the next $v_i$ unfilled columns of $R^i$. To complete these $v_i$ columns, increment by $1$ going down;
        \item Let $h_i$ be the number of cells in $R^{i+1}$ filled by $i+1$, and set $P_{i+1} := (x_i + h_i, y_i)$;
        \item Increment $i$ to $i+1$.
    \end{enumerate}
\end{itemize}

Output: The bounce statistic $\bounce(D) := \sum_{i \geq 0} iv_i$.
\end{algorithm}

Xin and Zhang call the final tableau created during the algorithm the \textit{rank tableau} and they note that $\bounce(D)$ is equal to the sum of the entries in its first row.
For an example showing the construction of the rank tableau, see \cite[Figure 3]{XinZhang}. 
The  \textit{bounce path} is a $\vec{k}$-Dyck path, distinct from the original Dyck path $D$, where each $Q_i$ is the start of an east step of $D$. 
Note that the bounce path for $D$ does not necessarily touch the diagonal line $y=x$ (or ``bounce'' off the line) after each sequence of consecutive east steps like it does in the classical $\vec{k} = (1,\ldots,1)$ case.

For small cases, one can give a piecewise closed-form formula for bounce.
In general, this becomes more difficult as the number of entries in~$\vec{k}$ increases. 
For example, we extensively use the following characterization of bounce when~$\vec{k}$ has three parts~\cite[p. 9]{XinZhang}. 

\begin{lemma}[Xin--Zhang]
    For a~$(k_1,k_2,k_3)$-Dyck path~$D$, 
    \begin{equation}\label{eq: bounce for k_1,k_2,k_3}
\bounce(D)=
\begin{cases}
   2(k_1-r_2)+r_2+k_2-r_3-\min(r_2,k_2)  &  \text{\rm if } r_2+k_2-r_3\ge 2\min(r_2,k_2), \\
   2(k_1-r_2)+\lceil\frac{r_2+k_2-r_3}{2}\rceil  &  \text{\rm otherwise.}
\end{cases}
    \end{equation}
\end{lemma}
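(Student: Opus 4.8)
The plan is to run the Xin--Zhang bounce algorithm (Algorithm~\ref{alg: bounce}) directly on a $(k_1,k_2,k_3)$-Dyck path $D$, parametrized by its red ranks $(r_1,r_2,r_3)=(0,r_2,r_3)$. Writing $n=k_1+k_2+k_3$ and $c_1=0$, $c_2=k_1-r_2$, $c_3=k_1+k_2-r_3$ for the horizontal positions of the three north runs, validity of the path translates into $0\le r_2\le k_1$ and $0\le r_3\le r_2+k_2$, equivalently $0\le c_2\le c_3\le k_1+k_2$. The first reduction is to invoke Xin--Zhang's observation that $\bounce(D)$ equals the sum of the first-row entries of the rank tableau; since the first row records, for each column $j$, the step $s_j$ at which that column is filled, we obtain $\bounce(D)=s_1+s_2+s_3$. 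The whole problem thus becomes the determination of the three filling times $s_1,s_2,s_3$ as functions of $(r_2,r_3)$.

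First I would record the \emph{east-step profile} of $D$: going north in column $x$, the bounce path stops at height $k_1$ if $x<c_2$, at height $k_1+k_2$ if $c_2\le x<c_3$, and at height $n$ if $x\ge c_3$; correspondingly $v_i$ counts the run-tops (at heights $k_1$, $k_1+k_2$, $n$) lying strictly above $P_i$ and at or below this stopping height. The second step is the easy bookkeeping that $s_1=0$ (the path climbs run~1 immediately) and that, during the ensuing \emph{creeping phase}, each cell of column~$1$ whose value equals $i+1$ pushes the bounce point east by one unit; since column~1 carries the values $0,1,\dots,k_1$, the bounce point advances one unit per step until it reaches column $c_2$, giving $s_2=c_2=k_1-r_2$.

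The heart of the argument, and the source of the two cases, is the creeping phase between runs~2 and~3. After run~2 is filled at step $s_2$, both columns~1 and~2 may still contain cells whose values exceed $s_2$, so the bounce point now advances by $h_i\in\{0,1,2\}$ per step. Indexing the creeping steps by $m\ge 0$ starting from $s_2$, column~1 stays \emph{alive} for $m<r_2$ and column~2 for $m<k_2$, so after $m$ steps the horizontal progress past $c_2$ is exactly $\min(m,r_2)+\min(m,k_2)$. Run~3 is reached, and column~3 filled, at the first $m$ for which this quantity attains the width $W:=c_3-c_2=r_2+k_2-r_3$. Solving $\min(m,r_2)+\min(m,k_2)\ge W$ for the minimal $m$ splits precisely according to whether $W\ge 2\min(r_2,k_2)$ (yielding $m=W-\min(r_2,k_2)$) or $W<2\min(r_2,k_2)$ (yielding $m=\lceil W/2\rceil$). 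Since $s_3=c_2+m$, substituting into $\bounce(D)=0+s_2+s_3=2(k_1-r_2)+m$ produces exactly the two branches of the claimed formula.

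I expect the main obstacle to be the careful verification that this creeping dynamics never stalls, that is, that $h_i\ge 1$ until run~3 is reached, so that $m$ is well defined and the algorithm terminates, together with the check that the degenerate configurations are absorbed by the same formulas rather than needing separate treatment: when $r_2=k_1$ runs~1 and~2 are filled together at step~$0$, and when $r_3=r_2+k_2$ runs~2 and~3 are filled together, yet in both cases the expressions $s_2=k_1-r_2$ and $s_3=c_2+m$ continue to hold. The cumulative bound $\min(m,r_2)+\min(m,k_2)=r_2+k_2\ge W$ attained by $m=\max(r_2,k_2)$ shows that column~3 is always filled before both earlier columns are exhausted, which is what rules out stalling; confirming that the boundary cases collapse correctly is then a short computation.
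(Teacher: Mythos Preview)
The paper does not supply a proof of this lemma; it is simply quoted from Xin--Zhang \cite[p.~9]{XinZhang}. Your argument is a correct direct verification via Algorithm~\ref{alg: bounce}. The core computation---that after column~2 is filled at step $c_2=k_1-r_2$, column~1 contributes to $h_{c_2+j}$ exactly when $c_2+j+1\le k_1$ (i.e.\ $j<r_2$) and column~2 exactly when $c_2+j+1\le c_2+k_2$ (i.e.\ $j<k_2$), so that the cumulative eastward displacement after $m$ further steps is $\min(m,r_2)+\min(m,k_2)$---is the right mechanism, and solving $\min(m,r_2)+\min(m,k_2)\ge W$ for the least $m$ indeed splits into the two stated branches. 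Your non-stalling check via $f(\max(r_2,k_2))=r_2+k_2\ge W=r_2+k_2-r_3$ is sound and guarantees $m^*\le\max(r_2,k_2)$, hence $h_{c_2+j}\ge 1$ for all $j<m^*$. The degenerate configurations $c_2=0$ (so $v_0\ge 2$) and $c_3=c_2$ (so $v_{c_2}\ge 2$) collapse correctly: in each case the relevant $s_j$ still equals $c_2$ resp.\ $c_2+0$, and both formulas return the same value at the boundary $W=2\min(r_2,k_2)$.

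One small wording issue: your description of $v_i$ as counting ``run-tops strictly above $P_i$ and at or below the stopping height'' is slightly off when $P_i$ itself sits at a run-top that is not the start of an east step (e.g.\ $P_{c_2}=(c_2,k_1)$), but the actual counts you use are correct.
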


\noindent In Section~\ref{sec:k^4}, we also state and use an involved formula for bounce when~$\vec{k} = (k,k,k,k)$ due to Niu~\cite{Niu}.

\begin{definition}
Let~$\mathcal{D}_{\vec{k}}$ be the set of all~$\vec{k}$-Dyck paths and define the refined~$q,t$-Catalan number by
\[
C_{\vec{k}}(q,t)=\sum_{D\in\mathcal{D}_{\vec{k}}}q^{\area(D)} \, t^{\bounce(D)} .
\]
Arranging the entries of~$\vec{k}$ in decreasing order gives a partition~$\mu(\vec{k})$ and we further define
\[
C_{\lambda}(q,t)=\sum_{\mu(\vec{k})=\lambda}\sum_{D\in\mathcal{D}_{\vec{k}}}q^{\area(D)} \, t^{\bounce(D)}.
\]
\end{definition}

Xin and Zhang proved the~$q,t$-symmetry of~$C_{(k_1,k_2)}(q,t)$,~$C_{(k_1,k_2,k_3)}(q,t)$, and~$C_{(k,k,k,k)}(q,t)$ for any positive integers~$k_1,k_2,k_3$, and~$k$ \cite{XinZhang, xin2022qtsymmetry}. 
For an example, see Figure~\ref{fig:C_[2,1](q,t)}.
They also investigated the symmetry of $C_{\lambda}(q,t)$, observing that symmetry holds for partitions of length 2.
They infer symmetry in the case that~$\lambda$ is a partition of length 3 as a corollary to the~$q,t$-symmetry of~$C_{(k_1,k_2,k_3)}(q,t)$ and exhibit examples showing that symmetry does not hold in general for partitions of length 4. 
Furthermore, Xin and Zhang conjectured that symmetry of~$C_\lambda(q,t)$ holds given~$\lambda=((a+1)^s,a^{n-s})$ with~$0\leq s\leq n$.
Niu~\cite{Niu} proves this conjecture in the case that~$n=4$ using similar partition analysis methods as those employed by Xin and Zhang~\cite{xin2022qtsymmetry}.

\begin{notation}\label{notation}
    When $\vec{k}$ or $\lambda$ are given specific values in $C_{\vec{k}}(q,t)$ or $C_{\lambda}(q,t)$, respectively, it can be ambiguous which definition is being referred to. 
    In this paper, we adopt the convention that if otherwise unlabelled by $\vec{k}$ or $\lambda$, we are referring to $C_{\vec{k}}(q,t)$ when we use an expression like $C_{(1,2,3)}(q,t)$.
\end{notation}

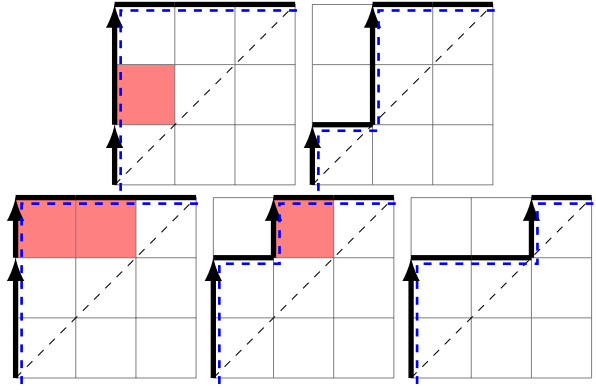
\begin{figure}
    \centering
\begin{tikzpicture}[scale=0.8] 
    \foreach \x/\y in {0/1}{
      \fill[red!50] (\x,\y) rectangle +(1,1);
    }
    \draw[help lines] (0,0) grid (3,3); 
    \draw[dashed] (0,0) -- (3,3); 
    \coordinate (current) at (0,0);
    \foreach \dir/\length in {1/1,1/2,0/1,0/1,0/1}{
      \ifnum\dir=0 
        \draw[line width=2pt] (current) -- ++(\length,0) coordinate (current);
      \else 
        \draw[line width=2pt, -latex] (current) -- ++(0,\length) coordinate (current); 
      \fi
    }
    \coordinate (currentBounce) at (0.1,-0.1); 
    \draw[blue, line width=1pt, dashed] (currentBounce) foreach \dir in {1, 1, 1, 0, 0, 0}{ 
      -- ++(\dir*90:1) coordinate (currentBounce)
    };
  \end{tikzpicture}
\begin{tikzpicture}[scale=0.8] 
    \foreach \x/\y in {}{
      \fill[red!50] (\x,\y) rectangle +(1,1);
    }
    \draw[help lines] (0,0) grid (3,3); 
    \draw[dashed] (0,0) -- (3,3); 
    \coordinate (current) at (0,0);
    \foreach \dir/\length in {1/1,0/1,1/2,0/1,0/1}{
      \ifnum\dir=0 
        \draw[line width=2pt] (current) -- ++(\length,0) coordinate (current);
      \else 
        \draw[line width=2pt, -latex] (current) -- ++(0,\length) coordinate (current); 
      \fi
    }
    \coordinate (currentBounce) at (0.1,-0.1); 
    \draw[blue, line width=1pt, dashed] (currentBounce) foreach \dir in {1, 0, 1, 1, 0, 0}{ 
      -- ++(\dir*90:1) coordinate (currentBounce)
    };
  \end{tikzpicture}
\\
\begin{tikzpicture}[scale=0.8] 
    \foreach \x/\y in {0/2,1/2}{
      \fill[red!50] (\x,\y) rectangle +(1,1);
    }
    \draw[help lines] (0,0) grid (3,3); 
    \draw[dashed] (0,0) -- (3,3); 
    \coordinate (current) at (0,0);
    \foreach \dir/\length in {1/2,1/1,0/1,0/1,0/1}{
      \ifnum\dir=0 
        \draw[line width=2pt] (current) -- ++(\length,0) coordinate (current);
      \else 
        \draw[line width=2pt, -latex] (current) -- ++(0,\length) coordinate (current); 
      \fi
    }
    \coordinate (currentBounce) at (0.1,-0.1); 
    \draw[blue, line width=1pt, dashed] (currentBounce) foreach \dir in {1, 1, 1, 0, 0, 0}{ 
      -- ++(\dir*90:1) coordinate (currentBounce)
    };
  \end{tikzpicture}
\begin{tikzpicture}[scale=0.8] 
    \foreach \x/\y in {1/2}{
      \fill[red!50] (\x,\y) rectangle +(1,1);
    }
    \draw[help lines] (0,0) grid (3,3); 
    \draw[dashed] (0,0) -- (3,3); 
    \coordinate (current) at (0,0);
    \foreach \dir/\length in {1/2,0/1,1/1,0/1,0/1}{
      \ifnum\dir=0 
        \draw[line width=2pt] (current) -- ++(\length,0) coordinate (current);
      \else 
        \draw[line width=2pt, -latex] (current) -- ++(0,\length) coordinate (current); 
      \fi
    }
    \coordinate (currentBounce) at (0.1,-0.1); 
    \draw[blue, line width=1pt, dashed] (currentBounce) foreach \dir in {1, 1, 0, 1, 0, 0}{ 
      -- ++(\dir*90:1) coordinate (currentBounce)
    };
  \end{tikzpicture}
\begin{tikzpicture}[scale=0.8] 
    \foreach \x/\y in {}{
      \fill[red!50] (\x,\y) rectangle +(1,1);
    }
    \draw[help lines] (0,0) grid (3,3); 
    \draw[dashed] (0,0) -- (3,3); 
    \coordinate (current) at (0,0);
    \foreach \dir/\length in {1/2,0/1,0/1,1/1,0/1}{
      \ifnum\dir=0 
        \draw[line width=2pt] (current) -- ++(\length,0) coordinate (current);
      \else 
        \draw[line width=2pt, -latex] (current) -- ++(0,\length) coordinate (current); 
      \fi
    }
    \coordinate (currentBounce) at (0.1,-0.1); 
    \draw[blue, line width=1pt, dashed] (currentBounce) foreach \dir in {1, 1, 0, 0, 1, 0}{ 
      -- ++(\dir*90:1) coordinate (currentBounce)
    };
  \end{tikzpicture}
    \caption{Top: The $(1,2)$-Dyck paths used to calculate $C_{(1,2)}(q,t)$. 
    Bottom: The $(2,1)$-Dyck paths used to calculate $C_{(2,1)}(q,t)$. 
    The red boxes contribute to Xin-Zhang's area and their bounce path is drawn in dashed blue. 
    The $(1,2)$-Dyck paths have area $1$ and $0$ and bounce $0$ and $1$. 
    The $(2,1)$-Dyck paths have area $2$, $1$, and $0$ and bounce $0$, $1$, and $2$. 
    Thus,~$C_{(1,2)}(q,t) = q+t$ and~$C_{(2,1)}(q,t) = q^2+qt+t^2$. 
    Summing, we obtain~$C_{\lambda = (2,1)}(q,t) = q^2+qt+t^2+q+t$.}
    \label{fig:C_[2,1](q,t)}
\end{figure}

\subsection{Integer-point generating functions of polyhedral cones}

As mentioned in the introduction, our computations employ the arithmetic of integer points in polyhedral cones, and we sketch how to compute their generating functions here, following \cite[Section~4.8]{crt}. 
A \ourdef{polyhedral cone} is a nonnegative linear combination of a finite set of vectors in $\R^d$. 
The cone is \ourdef{simplicial} if its generators are linearly independent, and \ourdef{rational} if we can choose its generators to be in $\Z^d$. 
To a rational cone $C$ we associate its \ourdef{integer-point transform}
\[
  \sigma_C (\vec z) := \sum_{ \vec m \in C \cap \Z^d } z_1^{ m_1 } z_2^{ m_2 } \cdots z_d^{ m_d } .
\]
We abbreviate the monomial in the sum as ${\vec z}^{ \vec m }$. 
We compute integer-point transforms of simplicial rational cones, with a twist, namely, some of its facets (faces of codimension 1) removed.
To be precise, fix linearly independent vectors $\vec v_1, \vec v_2, \dots, \vec v_k \in \Z^d$ and let 
\[
  C := \R_{ \ge 0 } \vec v_1 + \dots + \R_{ \ge 0 } \vec v_{m-1} + \R_{ >0 } \vec v_m + \dots
+ \R_{ >0 } \vec v_k \, ,
\]
which we call a \ourdef{half-open cone}; here $1 \le m \le k$.
Because $C$ is simplicial, each facet of $C$ is opposite one of its generators, and so we may think of $C$ as having the facets opposite $\vec v_m, \vec v_{ m+1 }, \dots, \vec v_k$ removed.
We call the vectors $\vec v_1,\dots, \vec v_k$  \ourdef{generators} of this cone.
A standard tiling argument and geometric series yield (see for example~\cite[proof of  Theorem~4.8.1]{crt}),
\[
  \sigma_C (\vec z) = \frac{ \sigma_{\Pi(C)} (\vec z) }{ \prod_{ j=1 }^k (1 - {\vec z}^{ \vec v_j }) }, 
\]
where
\[
  \Pi(C) := [0,1) \vec v_1 + \dots + [0,1) \vec v_{m-1} + (0,1] \vec v_m + \dots + (0,1] \vec v_k \, 
\]
denotes the \ourdef{fundamental parallelepiped} of $C$.
In particular, $\sigma_C (\vec z)$ is a rational function and so (because we can triangulate any cone into simplicial cones) is the integer-point transform of any rational cone.
Now, we are ready to apply these methods to our first application.


\section{The case \texorpdfstring{$\vec{k} = (k_1,k_2,k_3)$}{k = k1,k2,k3}}\label{sec:k_1k_2k_3}

Throughout this section, let~$\vec{k}=(k_1,k_2,k_3)$ be a vector of three positive integers that sum to~$n$. 
For a~$\vec{k}$-Dyck path, denote its red ranks (as defined in Definition~\ref{def:red_ranks})~$r_1,r_2,r_3$, recalling that~$r_1$ is 0. 
As discussed in Section~\ref{sec:Background}, the bounce of a~$\vec{k}$-Dyck path~$D$ is given by equation~\eqref{eq: bounce for k_1,k_2,k_3}, and 
\[
\text{area}(D)=r_2+r_3 \, .
\]
Our goal in this section is to replicate the following formula due to Xin and
Zhang~\cite[Section~2.3]{xin2022qtsymmetry} for the generating function~\eqref{eq:intro-F},
i.e., 
\begin{equation*}
\genfplain(x_1,x_2,x_3,q,t)=\sum_{k_1,k_2,k_3\ge
0}x_1^{k_1}x_2^{k_2}x_3^{k_3} \sum_{D\in\mathcal{D}_{(k_1,k_2,k_3)}}
q^{\text{area}(D)} \, t^{\text{bounce}(D)}.
\end{equation*}

\begin{theorem}\label{thm:k1k2k3}
The generating function $\genfkthree{}(x_1,x_2,x_3,q,t)$ equals
 \[
\frac{(1-x_1x_2qt^2)(1-x_1x_2q^2t)}{(1-x_2q)(1-x_2t)(1-x_1qt)(1-x_1t^2)(1-x_1q^2)(1-x_1x_2qt)(1-x_3)}
\, .
 \]
\end{theorem}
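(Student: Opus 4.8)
The plan is to parametrise each $(k_1,k_2,k_3)$-Dyck path by its red ranks $(r_2,r_3)$ (recall $r_1=0$) and to turn the double sum into an integer-point transform of a union of rational cones. Reading the path as a north run of length $k_1$, east steps descending to rank $r_2$, a north run $k_2$, east steps descending to rank $r_3$, a north run $k_3$, and a final east run, the condition of staying weakly above $y=x$ is exactly $0\le r_2\le k_1$ and $0\le r_3\le r_2+k_2$ (the final east steps then automatically bring the total to $n=k_1+k_2+k_3$). Hence the admissible tuples $(k_1,k_2,k_3,r_2,r_3)$ are the lattice points of a rational cone in $\R^5$, with $\area=r_2+r_3$. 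Since $k_3$ enters no other inequality and contributes only $x_3^{k_3}$, summing $k_3\ge 0$ peels off the factor $\frac{1}{1-x_3}$ --- accounting for that denominator factor --- and reduces everything to a four-dimensional sum in $(k_1,k_2,r_2,r_3)$.

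The one genuine obstacle is that $\bounce$, given by \eqref{eq: bounce for k_1,k_2,k_3}, is only piecewise linear, so I would subdivide the cone into pieces on which the summand is a single monomial in $\vec z=(x_1,x_2,x_3,q,t)$. I would cut along the wall $r_2=k_2$ (which resolves $\min(r_2,k_2)$) and along the case boundary of \eqref{eq: bounce for k_1,k_2,k_3}, giving four regions: $\{r_2\le k_2,\ r_3\le k_2-r_2\}$, $\{r_2\le k_2,\ r_3>k_2-r_2\}$, $\{r_2\ge k_2,\ r_3\le r_2-k_2\}$, and $\{r_2\ge k_2,\ r_3>r_2-k_2\}$. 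On the two ``case~1'' regions $\bounce$ is already linear ($2k_1-2r_2+k_2-r_3$ and $2k_1-r_2-r_3$ respectively). On the two ``case~2'' regions the ceiling $\lceil(r_2+k_2-r_3)/2\rceil$ survives, which I would linearise by splitting each according to the parity of $r_2+k_2-r_3$, i.e.\ passing to an index-$2$ sublattice on which $\bounce$ becomes linear. Throughout I would make the decomposition half-open, assigning each bounding hyperplane to exactly one piece so that the six pieces tile the cone without overlap.

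On each resulting half-open simplicial cone the summand is $\vec z^{\vec m}$ for a linear exponent map, so its generating function is $\sigma_{\Pi(C)}(\vec z)/\prod_j(1-\vec z^{\vec v_j})$ by the Background. I would read off the extreme rays and their monomials directly: the first region is unimodular with ray-monomials $x_1,\ x_2,\ x_2q,\ x_1x_2qt$, and the ``case~1, region~B'' piece is unimodular with ray-monomials $x_1,\ x_1qt,\ x_1q^2,\ x_1x_2qt$, already producing several target denominator factors, with $x_2t$ and $x_1t^2$ emerging symmetrically from the case~2 pieces. The parity-split cones are merely \emph{close} to unimodular: their fundamental parallelepipeds contain two lattice points, so there $\sigma_{\Pi(C)}$ is a sum of two monomials rather than $1$, and I expect these pieces to be exactly what produces the non-trivial numerator $(1-x_1x_2qt^2)(1-x_1x_2q^2t)$.

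Finally I would put the six rational functions over a common denominator and simplify. The spurious factors (such as $(1-x_1)$ and $(1-x_2)$) appearing in individual pieces must cancel against the numerators of their neighbours, leaving exactly the claimed denominator $(1-x_2q)(1-x_2t)(1-x_1qt)(1-x_1t^2)(1-x_1q^2)(1-x_1x_2qt)$ and numerator $(1-x_1x_2qt^2)(1-x_1x_2q^2t)$ once the $\frac{1}{1-x_3}$ factor is restored. I anticipate the two real difficulties to be (i) organising the half-open decomposition and the two parity sublattices so that every lattice point is counted exactly once, and (ii) the final consolidation, a large but routine rational-function computation; as a consistency check the resulting expression is manifestly invariant under $q\leftrightarrow t$ (the factors $(1-x_1q^2)$ and $(1-x_1t^2)$, and the two numerator factors, swap, while the rest are fixed), which recovers Xin--Zhang's $q,t$-symmetry.
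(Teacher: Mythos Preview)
Your plan is essentially the paper's own: encode $(k_1,k_2,k_3)$-Dyck paths as lattice points $(k_1,k_2,k_3,r_2,r_3)$ in a rational cone, subdivide along the piecewise-linear breaks of \eqref{eq: bounce for k_1,k_2,k_3}, compute integer-point transforms on each piece, and combine. Two small points of comparison are worth noting.

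First, your decomposition is finer than necessary. The case-2 formula $2(k_1-r_2)+\lceil(r_2+k_2-r_3)/2\rceil$ does not involve $\min(r_2,k_2)$, so there is no need to cut the case-2 region along $r_2=k_2$: the paper keeps it as a single simplicial cone $C_3$ with generators including $(1,0,0,1,1)$ and $(0,1,0,0,1)$, whose fundamental parallelepiped contains exactly two lattice points, and those two points are precisely the two parity classes you propose to separate by hand. So the paper's three cones $C_1,C_2,C_3$ (with one overlap subtracted) do the work of your six half-open pieces.

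Second, a few of your stated ray-monomials are off because you dropped the $t^{\bounce}$ contribution on the ``pure'' rays. For instance, on your first region ($r_2\le k_2$, $r_3\le k_2-r_2$) the ray $(k_1,k_2,r_2,r_3)=(1,0,0,0)$ has bounce $2k_1=2$, giving $x_1t^2$, not $x_1$; likewise $(0,1,0,0)$ gives $x_2t$, not $x_2$. Consequently the factors $(1-x_1t^2)$ and $(1-x_2t)$ already appear in the case-1 pieces, not ``symmetrically from the case-2 pieces'' as you anticipate, and no spurious $(1-x_1)$ or $(1-x_2)$ ever arises. These are bookkeeping slips, not structural errors; with them corrected your outline goes through and yields the stated formula.
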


Note that this formula is $q,t$-symmetric, and so the $q,t$-symmetry of
$C_{(k_1,k_2,k_3)}(q,t)$ follows immediately.
We represent the 
$(k_1,k_2,k_3)$-Dyck paths as integer points of cones and then compute
$\genfkthree{}(x_1,x_2,x_3,q,t)$ via integer-point transforms.

Xin and Zhang~\cite{xin2022qtsymmetry} split their computation of this generating function into two parts, each divided into two cases based on numerical relationships between the quantities~$k_2$,~$r_2$, and~$r_3$ that are relevant in the computation of bounce. 
We consider both the conditions in the bounce formula and the geometry of our cones to form a coarser decomposition than that of Xin and Zhang.

Each $(k_1,k_2,k_3)$-Dyck path corresponds to a vector of the form $(k_1, k_2, k_3, r_2, r_3)$ where we omit~$r_1$, as it always equals~$0$. 
By definition the following conditions hold for~$k_1,k_2,k_3$-Dyck paths:
\begin{align}
&k_1, k_2, k_3 \geq 0 ,\label{eq: ineq1}\\
&k_1 \geq r_2 \geq 0,\label{eq: ineq2}\\
&r_2 + k_2 \geq r_3 \geq 0 \label{eq: ineq3}.
\end{align}
\begin{figure}[htbp!]
     \centering
     \begin{tikzpicture}
     
\node[circle,fill=black,minimum size=4pt,inner sep=-1,draw=black,label=above left:{$\left[\begin{smallmatrix}
         1\\0\\0\\1\\0      \end{smallmatrix}\right]$}] (1) at (-4,4) {};   

\node[circle,fill=black,minimum size=4pt,inner sep=-1,draw=black,label=above:{$\left[\begin{smallmatrix}
    1/2\\1/2\\0\\1/2\\0
\end{smallmatrix}\right]$}] (2) at (0,4) {};

\node[circle,fill=black,minimum size=4pt,inner sep=-1,draw=black,label=above right:{$\left[\begin{smallmatrix}
          0\\1\\0\\0\\0
      \end{smallmatrix}\right]$} ] (3) at (4,4) {}; 

\node[circle,fill=black,minimum size=4pt,inner sep=-1,draw=black,label=below right:{$\left[\begin{smallmatrix}
          0\\1\\0\\0\\1
      \end{smallmatrix}\right]$} ] (4) at (4,0) {}; 

\node[circle,fill=black,minimum size=4pt,inner sep=-1,draw=black,label=below:{$\left[\begin{smallmatrix}
          1/2\\1/2\\0\\1/2\\1
      \end{smallmatrix}\right]$} ] (5) at (0,0) {};

\node[circle,fill=black,minimum size=4pt,inner sep=-1,draw=black,label=below left: {$\left[\begin{smallmatrix}
    1\\0\\0\\1\\1
\end{smallmatrix}\right]$}] (6) at (-4,0) {};

\coordinate [label=$C_1$] (11) at (-3,2);
\coordinate [label=$C_2$] (12) at (3,2);
\coordinate [label=$C_3$] (13) at (0,2);

\coordinate (7) at (-3.8,0);
\coordinate (8) at (0,3.8);
\coordinate (9) at (0,3.8);
\coordinate (10) at (3.8,0);
\draw (1) to (2) to (3) to (4) to (5) to (6) to (1);
\draw (6) to (2) to (4);
\draw[dashed] (7) to (8);
\draw[dashed] (9) to (10);

     \end{tikzpicture}
     \caption{Decomposition of the~$k_1,k_2,k_3$-case illustrated via projection onto \[k_3=0, k_1+k_2=1, \text{ and } k_1=r_1.\]  
Solid and dashed lines indicate included and missing faces of the respective cones.} 
     \label{fig: open the big cone}
 \end{figure}
These inequalities define a 5-dimensional cone with six generators: four of
them depicted in Figure~\ref{fig: open the big cone} plus the unit vectors in direction $k_1$ and $k_3$.
This cone naturally decomposes into three simplicial cones, also shown in
Figure~\ref{fig: open the big cone}.
For each of these subcones, we write its integer-point transform
\begin{align*}
    \sigma_{C_i} (z_1, z_2, z_3, w_2, w_3) = \sum_{(\vec{k}, \vec{r}) \in C_i \cap \Z^5} z_1^{k_1} z_2^{k_2} z_3^{k_3} w_2^{r_2} w_3^{r_3}.
\end{align*}
We now compute these.

\subsection{Cone \texorpdfstring{$C_1$}{C1}} \label{subsec: cone C_1}
Let $C_1$ be the cone defined by the inequalities~\eqref{eq: ineq1}-\eqref{eq: ineq3} along with
\begin{equation*}
    r_2+k_2-r_3 \geq 2k_2 \quad \text{ and } \quad r_2 \geq k_2 \, .
\end{equation*}
Thus, $C_1$ has the generators
\[\begin{bmatrix}
    1\\0\\0\\0\\0
\end{bmatrix}, \begin{bmatrix}
    0\\0\\1\\0\\0
\end{bmatrix}, \begin{bmatrix}
    1\\0\\0\\1\\0
\end{bmatrix}, \begin{bmatrix}
    1\\0\\0\\1\\1
\end{bmatrix},\begin{bmatrix}
    1\\1\\0\\1\\0
\end{bmatrix}
\]
and is \ourdef{unimodular}: these generators are a lattice basis for $\Z^5$.
The corresponding integer-point transform is
\begin{align*}\label{eq: gf of C_1}
    \sigma_{C_1}(z_1,z_2,z_3,w_2,w_3) &=
\frac{1}{(1-z_1)(1-z_3)(1-z_1w_2)(1-z_1w_2w_3)(1-z_1z_2w_2)} \, .
\end{align*}

Any~$\vec{k}$-Dyck path corresponding to an integer point in $C_1$ satisfies both \[\min(r_2,k_2)=k_2 \text{  and  } r_2+k_2-r_3\ge 2\min(r_2,k_2),\] so its bounce is computed by
\[
\text{bounce}(D)=2k_1-r_2-r_3 
\]
and thus, the contribution of $C_1$ to \eqref{eq:intro-F} is the generating function
\begin{align*}\label{eqn:F1}
    \genfkthree_{1}(x_1,x_2,x_3,q,t) &= \sum_{(\vec{k}, \vec{r}) \in C_1 \cap \Z^5}
x_1^{k_1} x_2^{k_2}x_3^{k_3} q^{r_2 + r_3} t^{2k_1-r_2-r_3} .
\end{align*}
Writing down the generating function~$\genfkthree_{1}$ is equivalent to replacing each monomial term $z_1^{k_1}z_2^{k_2}z_3^{k_3}w_2^{r_2}w_3^{r_3}$ of~$\sigma_{C_1}$ with the corresponding term~$x_1^{k_1} x_2^{k_2}x_3^{k_3} q^{r_2 + r_3} t^{2k_1-r_2-r_3}$, and so
\begin{align*}
    \genfkthree_{1}(x_1,x_2,x_3,q,t)&= \sum_{(\vec{k}, \vec{r}) \in C_1 \cap \Z^5}
(x_1t^2)^{k_1} \left(x_2\right)^{k_2}\left(x_3\right)^{k_3}
\left(qt^{-1}\right)^{r_2} (qt^{-1})^{r_3} \\
    &=
\sigma_{C_1}(x_1t^2,x_2,x_3,qt^{-1},qt^{-1})\\&=\frac{1}{(1-x_1t^2)(1-x_3)(1-x_1qt)(1-x_1q^2)(1-x_1x_2qt)}
\, .
\end{align*}

\subsection{Cone \texorpdfstring{$C_2$}{C2}}\label{subsec: cone C_2}
Let $C_2$ be the cone defined by the inequalities~\eqref{eq: ineq1}-\eqref{eq: ineq3} along with
\begin{equation*}
    r_2+k_2-r_3 \geq 2k_2 \quad \text{ and } \quad  r_2 \leq k_2 \, .
\end{equation*}
The cone~$C_2$ is also unimodular, generated by 
\[
\begin{bmatrix}
    1\\0\\0\\0\\0
\end{bmatrix}, \begin{bmatrix}
    0\\1\\0\\0\\0
\end{bmatrix}, \begin{bmatrix}
    0\\0\\1\\0\\0
\end{bmatrix}, \begin{bmatrix}
    1\\1\\0\\1\\0
\end{bmatrix}, \begin{bmatrix}
    0\\1\\0\\0\\1
\end{bmatrix},
\]
and the corresponding integer-point transform is
\begin{align*}\label{eq: gf of C_2}
    \sigma_{C_2}(z_1,z_2,z_3,w_2,w_3) &=
\frac{1}{(1-z_1)(1-z_2)(1-z_3)(1-z_1z_2w_2)(1-z_2w_3)} \, .
\end{align*}

For any~$\vec{k}$-Dyck path~$D$ corresponding to an integer point in this cone, 
\[
\text{bounce}(D)=2(k_1-r_2)+k_2-r_3 \, ,
\]
and so the contribution of $C_2$ to \eqref{eq:intro-F} is the generating function
\begin{align*}
    \genfkthree_{2}(x_1,x_2,x_3,q,t) &= \sum_{(\vec{k}, \vec{r}) \in C_2 \cap \Z^5}
x_1^{k_1} x_2^{k_2}x_3^{k_3} q^{r_2 + r_3} t^{2(k_1-r_2)+k_2-r_3}\\
&=\sum_{(\vec{k}, \vec{r}) \in C_2 \cap \Z^5} (x_1t^2)^{k_1}
\left(x_2t\right)^{k_2}\left(x_3\right)^{k_3} \left(qt^{-2}\right)^{r_2}
(qt^{-1})^{r_3} \\
    &= \sigma_{C_2}(x_1t^2,x_2t,x_3,qt^{-2},qt^{-1}) \\
    &=\frac{1}{(1-x_1t^2)(1-x_2t)(1-x_3)(1-x_1x_2qt)(1-x_2q)} \, .
\end{align*}

\subsection{Cone \texorpdfstring{$C_3$}{C3}}\label{subsec: cone C_3}
Let $C_3$ be the cone defined by the inequalities~\eqref{eq: ineq1}-\eqref{eq: ineq3} along with
\begin{equation*}
    r_2+k_2-r_3 < 2k_2 \quad \text{ and } \quad r_2+k_2-r_3 < 2r_2 \, .
\end{equation*}
The cone~$C_3$ is generated by
\[
\begin{bmatrix}
    1\\0\\0\\0\\0
\end{bmatrix},  \begin{bmatrix}
    0\\0\\1\\0\\0
\end{bmatrix}, \begin{bmatrix}
    1\\1\\0\\1\\0
\end{bmatrix}, \begin{bmatrix}
    1\\0\\0\\1\\1
\end{bmatrix},\begin{bmatrix}
    0\\1\\0\\0\\1
\end{bmatrix}
\]
and thus, not unimodular. 
The fundamental parallelepiped of $C_3$ is 
 \[\Pi(C_3) = \left\{\lambda_1 \begin{bmatrix}
    1\\0\\0\\0\\0
\end{bmatrix} + \lambda_2 \begin{bmatrix}
    0\\0\\1\\0\\0
\end{bmatrix} + \lambda_3\begin{bmatrix}
    1\\1\\0\\1\\0
\end{bmatrix} + \lambda_4 \begin{bmatrix}
    1\\0\\0\\1\\1
\end{bmatrix}+\lambda_5\begin{bmatrix}
    0\\1\\0\\0\\1
\end{bmatrix} \ \middle|  \ 0 \leq \lambda_1,\lambda_2,\lambda_3 < 1 \text{ and } 0< \lambda_4,\lambda_5 \leq 1\right\},\]
with $\Pi(C_3) \cap \Z^5 = \{(1,1,0,1,2), (1,1,0,1,1)\}$. 
Thus,
\begin{align}
        \sigma_{C_3}(z_1,z_2,z_3,w_2,w_3)= &\frac{ z_1 z_2 w_2 w^2_3 + z_1 z_2
w_2w_3}{(1-z_1)(1-z_3)(1-z_1z_2w_2)(1-z_1w_2w_3)(1-z_2w_3)} \, .
\label{eq:c3intpttransf}
\end{align}

For the cones~$C_1$ and~$C_2$, we used the linearity of the bounce formulas to write~$\genfkthree_{1},\genfkthree_{2}$ using a change of variables in~$\sigma_{C_1},\sigma_{C_2}$. 
However, for a $\vec{k}$-Dyck path~$D$ corresponding to an integer point in~$C_3$, 
\[
\text{bounce}(D)=  2(k_1-r_2)+\left\lceil\frac{r_2+k_2-r_3}{2}\right\rceil .
\]
In order to obtain a rational formula for~$\genfkthree_{3}$, we can partition the integer points in $C_3$ into two subsets, such that bounce is a linear function on each subset.
This partition corresponds to the two summands in the numerator
of~\eqref{eq:c3intpttransf}.
Let
\[
A = \left\{
\alpha_1\begin{bmatrix}
    1\\0\\0\\0\\0
\end{bmatrix}  +
\alpha_2\begin{bmatrix}
    0\\0\\1\\0\\0
\end{bmatrix}+
\alpha_3\begin{bmatrix}
    1\\1\\0\\1\\0
\end{bmatrix}+
\alpha_4\begin{bmatrix}
    1\\0\\0\\1\\1
\end{bmatrix}+
\alpha_5\begin{bmatrix}
    0\\1\\0\\0\\1
\end{bmatrix} \\\ \middle| \ \alpha_1,\alpha_2,\alpha_3\in\Z_{\ge 0} \text{ and } \alpha_4,\alpha_5\in\Z_{>0}
\right\}.
\]
For the points in $A$, area is computed by $\alpha_3+2\alpha_4+\alpha_5$, and (since~$A$ falls under the second bounce condition) bounce is computed by
 \begin{align*}
     2(k_1-r_2)+\left\lceil\frac{r_2+k_2-r_3}{2}\right\rceil&=
     2(\alpha_1+\alpha_3+\alpha_4-\alpha_3-\alpha_4)+\left\lceil\frac{\alpha_3+\alpha_4+\alpha_3+\alpha_5-\alpha_4-\alpha_5}{2}\right\rceil\\&=2\alpha_1+\alpha_3
\, .
 \end{align*}
Hence, $A$ contributes the generating function
\begin{align*}
  \genfkthree_{3A}(x_1,x_2,x_3,q,t)
  &= \sum_{\alpha_i \in \mathbb{Z}_{\geq 0}, \, \alpha_4,\alpha_5\neq
0}(x_1t^2)^{{\alpha_1}} (x_3)^{\alpha_2} (x_1x_2qt)^{\alpha_3}
(x_1q^2)^{\alpha_4} (x_2q)^{\alpha_5} \\
  &= \frac{  x_1x_2q^3}{(1-x_1t^2)(1-x_3)(1-x_1x_2qt)(1-x_2q)(1-x_1q^2)} \, .
\end{align*}

The second set of the partition of integer points in $C_3$ is
\[
B = \left\{\begin{bmatrix}
    1\\1\\0\\1\\1
\end{bmatrix} + 
\alpha_1\begin{bmatrix}
    1\\0\\0\\0\\0
\end{bmatrix}  +
\alpha_2\begin{bmatrix}
    0\\0\\1\\0\\0
\end{bmatrix}+
\alpha_3\begin{bmatrix}
    1\\1\\0\\1\\0
\end{bmatrix}+
\alpha_4\begin{bmatrix}
    1\\0\\0\\1\\1
\end{bmatrix}+
\alpha_5\begin{bmatrix}
    0\\1\\0\\0\\1
\end{bmatrix}  \ \middle| \  \alpha_i\in\Z_{\ge 0}
\right\}.
\]
Area is computed by~$\alpha_3+2\alpha_4+\alpha_5 + 2$, by summing the fourth and fifth coordinates corresponding to~$r_2$ and~$r_3$, and the constant 2 represents the contribution from the special point~$(1,1,0,1,1)$.
Since~$B$ falls under the second bounce condition, bounce is computed by
 \begin{align*}
     2(k_1-r_2)+\left\lceil\frac{r_2+k_2-r_3}{2}\right\rceil=  2\alpha_1 +
\alpha_3 + 1 \, ,
 \end{align*} 
and so the generating function enumerating points in set~$B$ is
\[ \genfkthree_{3B}(x_1,x_2,x_3,q,t) =\frac{
x_1x_2q^2t}{(1-x_1t^2)(1-x_3)(1-x_1x_2qt)(1-x_2q)(1-x_1q^2)} \, . \]

Lastly, we combine the generating functions for sets~$A$ and~$B$ to obtain a generating function for~$(k_1,k_2,k_3)$-Dyck paths in cone~$C_{3}$:
\[\genfkthree_{3}(x_1,x_2,x_3,q,t) =\frac{ x_1x_2q^3 + x_1x_2 q^2
t}{(1-x_1t^2)(1-x_3)(1-x_1x_2qt)(1-x_2q)(1-x_1q^2)} \, .\]

\subsection{Summing over the full cone}
By construction, any~$(k_1,k_2,k_3)$-Dyck path corresponds to a point in either~$C_1, C_2$ or~$C_3$. 
The only points that appear in more than one cone are points in the cone generated by ~$\left[\begin{smallmatrix}
     1\\0\\0\\0\\0
 \end{smallmatrix}\right],\left[\begin{smallmatrix}
     0\\0\\1\\0\\0
 \end{smallmatrix}\right],\left[\begin{smallmatrix}
     1\\1\\0\\1\\0
 \end{smallmatrix}\right]$
(represented by the point labeled by $\left[\begin{smallmatrix}
    1/2\\1/2\\0\\1/2\\0
\end{smallmatrix}\right]$
in Figure~\ref{fig: open the big cone}), which appear in both cones~$C_1$ and~$C_2$ and are enumerated by the generating function
 \[ \sum_{(\vec{k}, \vec{r}) \in (C_1\cap C_2)\cap \Z^5} x_1^{k_1}
x_2^{k_2}x_3^{k_3} q^{r_2 + r_3} t^{2k_1-r_2-r_3} =
\frac{1}{(1-x_1t^2)(1-x_3)(1-x_1x_2qt)} \, . \]
Hence,
 \begin{align*}
     \genfkthree{}(x_1,x_2,x_3,q,t) &= \genfkthree_{1} + \genfkthree_{2} + \genfkthree_{3}
-\frac{1}{(1-x_1t)(1-x_3)(1-x_1x_2qt)} \\ &=
\frac{(1-x_1x_2qt^2)(1-x_1x_2q^2t)}{(1-x_2q)(1-x_2t)(1-x_1qt)(1-x_1t^2)(1-x_1q^2)(1-x_1x_2qt)(1-x_3)}
\, ,
 \end{align*} 
and this finishes our proof of Theorem~\ref{thm:k1k2k3}.


\section{The case \texorpdfstring{$\vec{k} = (k,k,k,k)$}{k4}}\label{sec:k^4}

We turn our attention to our second application, where we revisit ideas
from the previous section, as well as introduce some new ones.
Our goal in this section is to compute~\eqref{eq:intro-H}, i.e.,
\[
  \genffourk(x, q, t) := \sum_{ k \in \Z_{ \ge 0 } } C_{(k,k,k,k)}(q,t) \, x^k,
\]
re-establishing the following formula of Xin and Zhang~\cite{xin2022qtsymmetry}. 

\begin{theorem}\label{thm:kkkk}
The generating function $\genffourk(x,q,t)$ equals
\begin{equation*}
\frac{N}{(1-q^3tx)(1-qt^3x)(1-q^2t^2x)(1-q^6x)(1-t^6x)} \, ,
\end{equation*}
where 
\begin{align*}
    N=1+& \left(q^5t+qt^5+q^4t^2+q^2t^4+q^4t+qt^4+q^3t^2+q^2t^3+q^3t^3\right)x\\
    & {} + \left(-q^7t^3-q^3t^7+q^6t^5+q^5t^6-q^6t^4-q^4t^6-q^5t^5-q^5t^4-q^4t^5\right)x^2\\
    & {} - \left(q^8t^8+q^9t^6+q^6t^9+q^8t^7+q^7t^8\right)x^3 .
\end{align*}
\end{theorem}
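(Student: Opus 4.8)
The plan is to mirror the polyhedral strategy of Section~\ref{sec:k_1k_2k_3}, now specialized to a single size parameter~$k$. Each $(k,k,k,k)$-Dyck path is determined by its red ranks $(r_1,r_2,r_3,r_4)$ with $r_1 = 0$, so I encode it as the lattice point $(k,r_2,r_3,r_4) \in \Z^4$. The defining conditions for such paths read
\[
k \ge 0, \quad 0 \le r_2 \le k, \quad 0 \le r_3 \le r_2 + k, \quad 0 \le r_4 \le r_3 + k,
\]
and cut out a $4$-dimensional rational cone~$P$. On $P$ the area is the linear form $\area = r_2 + r_3 + r_4$, so $\genffourk(x,q,t)$ is the integer-point transform of $P$ read off after the monomial substitution recording $x^k q^{\area} t^{\bounce}$. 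As in the three-part case, everything hinges on writing $\bounce$ as a piecewise-linear function, so that this substitution is monomial on each piece.

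First I would import Niu's bounce formula~\cite{Niu} for $(k,k,k,k)$-Dyck paths, which expresses $\bounce$ piecewise in $k,r_2,r_3,r_4$ through several linear inequalities together with $\min$, $\max$, and ceiling terms. Each linear region of this formula, intersected with $P$, is a subcone; since $P$ is not simplicial (it has more generators than its dimension), I would refine the resulting arrangement into a fan of \emph{simplicial} subcones whose relative interiors avoid all region boundaries and on each of which $\bounce$ is affine-linear (after the sublattice split described below). This is the four-part analogue of the decomposition into $C_1,C_2,C_3$ in Figure~\ref{fig: open the big cone}, and I would again realize the subcones as half-open cones so that each shared facet is assigned to exactly one piece and the eventual assembly is a disjoint sum.

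Next, for each subcone I would compute the integer-point transform $\sigma_{C_i}(z_1,z_2,z_3,z_4)$. For the unimodular pieces this is the reciprocal of a product $\prod_j (1 - \vec z^{\,\vec v_j})$ over the generators, exactly as for $C_1$ and $C_2$; for the non-unimodular pieces I would enumerate $\Pi(C_i) \cap \Z^4$ to produce the numerator, as was done for $C_3$. Wherever Niu's formula contributes a ceiling, $\bounce$ is not linear on $C_i$ itself, and there I would partition the lattice points into the two residue classes on which $\lceil \cdot \rceil$ becomes affine-linear, splitting $\sigma_{C_i}$ into the corresponding summands, exactly as the points of $C_3$ were separated into the sets $A$ and $B$. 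On each resulting piece I then apply the monomial substitution dictated by the now-linear forms: writing $\bounce = \alpha k + \beta_2 r_2 + \beta_3 r_3 + \beta_4 r_4$, I send $z_1 \mapsto x\,t^{\alpha}$ and $z_j \mapsto q\,t^{\beta_j}$ for $j = 2,3,4$, turning $\sigma_{C_i}$ into the generating-function contribution of that piece.

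Finally I would sum all the piecewise contributions, subtract the lower-dimensional overlaps prescribed by the half-open structure, clear denominators, and reduce to lowest terms. Two denominator factors can be predicted at once: at $k=1$ the all-north-then-all-east path has $\area = 6$ and $\bounce = 0$, giving $1 - q^6 x$, while the staircase path has $\area = 0$ and $\bounce = 6$, giving $1 - t^6 x$; the remaining factors $1 - q^3 t x$, $1 - q t^3 x$, $1 - q^2 t^2 x$ and the explicit numerator $N$ appear only after the contributions of all subcones---together with their parallelepiped numerators and ceiling corrections---are combined and simplified. The $q,t$-symmetry of the result, and hence of $C_{(k,k,k,k)}(q,t)$, is then visible. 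I expect the main obstacle to be the sheer complexity of Niu's piecewise formula relative to the three-part case: it forces many more subcones, several of them non-unimodular and several requiring the ceiling-induced sublattice split, so the real work lies in correctly tracking which facet belongs to which half-open cone and in the large final rational-function simplification, which is best carried out with a computer algebra system.
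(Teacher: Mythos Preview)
Your plan matches the paper's strategy and would succeed. Two implementation points are worth flagging. First, the paper passes to the complementary coordinates $(k,a,b,c)$ with $r_2=k-a$, $r_3=2k-a-b$, $r_4=3k-a-b-c$, because Niu's formula is written in those variables and its case split is governed by the \emph{parity of $b$}---a sublattice condition that selects the branch, distinct from the ceilings appearing inside each branch. In these coordinates the generators of the subcones for Parts~2 and~3 all carry even $b$-entry, so the even/odd split is accomplished simply by keeping or discarding numerator monomials with odd $z_2$-exponent, with no explicit sublattice enumeration. Second, your ``two residue classes'' is not always enough: Niu's formula contains terms such as $\lceil (c-1)/3\rceil$, and the paper absorbs the resulting three-way split into a denominator factor $1-yz_3^3$ together with three numerator terms encoding the residues of $c$ modulo~$3$. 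Neither point is a gap in your argument, but the paper's coordinate choice and numerator-extraction trick make the bookkeeping considerably lighter than a direct attack in $(k,r_2,r_3,r_4)$.
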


Note again the~$q,t$-symmetry of~$\genffourk$.
Throughout this section, let~$\vec{k}=(k,k,k,k)$, where $k$ is a positive integer. 
Recall that a~$(k,k,k,k)$-Dyck path~$D$ can be uniquely identified with the
vector $(k,a,b,c)$ based on the red ranks~$(0, r_2, r_3,r_4)$, where
\[
r_2 = k-a, \quad r_3 = 2k-a-b, \quad 
\text{ and } r_4 = 3k-a-b-c.
\]
We have from ~\cite{Niu} that $\operatorname{area}(D) = 6k-3a-2b-c$ and
\begin{align*}
    \text{bounce}(D) &= \begin{cases}
        6a + 3b + c - 4k & \text{ if } b \geq 2k-2a \text{ and } c\geq 4k-2a-2b, \\
        5a + 2b + \lceil\frac{c}{2}\rceil - 2k  & \text{ if } b \geq 2k-2a \text{ and } c< 4k-2a-2b,\\
        4a + 2b + c -2k  & \text{ if } b < 2k-2,\, b \text{ is even, and }c\geq 3k-a-\frac{3b}{2},\\ 
        2a + \frac{b}{2} + k + \left\lceil \frac{3a +\frac{3b}{2}+c - 3k}{2}
\right\rceil & \text{ if } b < 2k-2,\, b \text{ is even, and } \\
        & \quad \quad \quad 3k-3a-\frac{3b}{2} \leq c<3k-a-\frac{3b}{2},\\ 
        3a + b + \frac{c}{3} & \text{ if } b < 2k-2a,\, b \text{ is even, and } c< 3k-3a-\frac{3b}{2},\\ 
        4a + 2b + c-2k + 1 & \text{ if } b < 2k-2a,\, b \text{ is odd, and } \\
        & \quad \quad \quad c\geq 3k-a-\frac{3(b+1)}{2} +1, \\ 
        2a + \frac{b+1}{2} + k +  &\text{ if } b < 2k-2a,\, b \text{ 
is odd, and }\\
\quad \quad \frac{3a + \frac{3(b+1)}{2} + c -3k-1 }{2} & 3k-3a - \frac{3(b+1)}{2}+1\leq c<3k-a-\frac{3(b+1)}{2} +1 , \\
        3a + b + 1 + \lceil \frac{c-1}{3}\rceil &\text{ if } b < 2k-2, \, b \text{ is odd, and } \\
        & \quad \quad \quad c< 3k-3a - \frac{3(b+1)}{2}+1.    \end{cases}
\end{align*}

We now use the techniques from Section~\ref{sec:k_1k_2k_3} to compute the generating function
\[\genffourk(x,y_1,y_2,y_3,q,t) = \sum_{k\geq 0 } x^k \sum_{(k,a,b,c) \in \mathcal{D}_{(k,k,k,k)}} y_1^a \  y_2^b \ y_3^c \ q^{\text{area}(D)}  \ t^{\text{bounce}(D)},\]
as a sum of generating functions derived from cones. 
By definition, all vectors $(k,a,b,c)$ corresponding to~$(k,k,k,k)$-Dyck paths must satisfy the following inequalities:
\begin{align}
    0 &\leq a \leq k \label{ineq:k^4 1} \, ,\\
    0 &\leq b \leq 2k-a\label{ineq:k^4 2} \, ,\\
    0 &\leq c \leq 3k-a-b\label{ineq:k^4 3} \, .
\end{align}
This system of inequalities corresponds to the cone~$C$ generated by 
\begin{align*}\label{k^4_generators}
    \begin{bmatrix}1\\0\\2\\0
\end{bmatrix}, \begin{bmatrix}1\\0\\0\\0
\end{bmatrix}, \begin{bmatrix}1\\1\\0\\0
\end{bmatrix}, \begin{bmatrix}1 \\ 1\\1\\0 \end{bmatrix},\begin{bmatrix} 1 \\ 1\\1\\1 \end{bmatrix},  \begin{bmatrix}1 \\ 0\\2\\1 \end{bmatrix}, \begin{bmatrix}1 \\ 0\\0\\3\end{bmatrix},  \begin{bmatrix}1 \\ 1\\0\\2\end{bmatrix}.
\end{align*}
We denote these cone generators by~$v_1, v_2, \dots, v_8$, respectively.
Figure~\ref{fig: k4 generators} depicts a projection of them to the hyperplane
$k=1$.
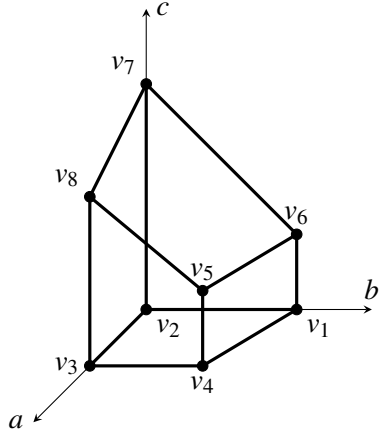
\begin{figure}[htbp!]
    \centering
\begin{tikzpicture}
        \coordinate [label=below right:$v_1$] (v1) at (2,0);
        \circleat{2}{0}{black};
        
        \coordinate [label=below right:$v_2$] (v2) at (0,0);
        \circleat{0}{0}{black};
        
        \coordinate [label=left:$v_3$] (v3) at (-.75,-.75);
        \circleat{-.75}{-.75}{black};
        
        \coordinate [label=below:$v_4$] (v4) at (.75,-.75);
        \circleat{.75}{-.75}{black};

        \coordinate [label=above:$v_5$] (v5) at (0.75,0.25);
        \circleat{0.75}{0.25}{black};

        \coordinate [label=above:$v_6$] (v6) at (2,1);
        \circleat{2}{1}{black};

        \coordinate [label=above left:$v_7$] (v7) at (0,3);
        \circleat{0}{3}{black};

        \coordinate [label=above left:$v_8$] (v8) at (-.75,1.5);
        \circleat{-.75}{1.5}{black};

\coordinate [label=left:$a$] (x) at (-1.5,-1.5);
\coordinate [label=above:$b$] (y) at (3,0);
\coordinate [label=right:$c$] (z) at (0,4);

\draw[very thick] (v3) -- (v8) -- (v7) -- (v6) -- (v1) -- (v4) -- (v3);
\draw[very thick] (v8) -- (v5) -- (v6);
\draw[very thick] (v4) -- (v5);
\draw[very thick] (v2) -- (v7);
\draw[very thick] (v2) -- (v3);
\draw[very thick] (v2) -- (v1);

\draw[-stealth] (v2) to (x);
\draw[-stealth] (v2) to (y);
\draw[-stealth] (v2) to (z);

    \end{tikzpicture}
    \caption{All generators have first coordinate 1, so we visualize them by projecting to the last three coordinates.}
    \label{fig: k4 generators}
\end{figure}
To compute the integer-point transform
\[\sigma_{C}(y,z_1,z_2,z_3) = \sum\limits_{(k,a,b,c)\in C} y^k z_1^a z_2^b
z_3^c \, , \]
we triangulate $C$ depending on the bounce cases listed above.
We follow the notation in \cite{xin2022qtsymmetry} and denote
\begin{align*}
    \textbf{Part 1}&:b \geq 2k-2a,\\\textbf{Part 2}&: b < 2k-2a \text{ and~$b$ even}, \\ \textbf{Part 3}&: b < 2k-2a \text{ and~$b$ odd}.
\end{align*}
Part 1 is split into two cases, 
Part 2 into three cases where~$b$ is always even,
and Part 3 into three cases where~$b$ is always odd.

\subsection*{Part 1}\label{subsec:part1}
We divide the points satisfying $b \ge 2k - 2a$ into two cases:
\begin{align}
    \textbf{Case 1: }& c\geq 4k-2a-2b,\label{ineq:k^4 part 1.1}\\\textbf{Case 2: }& c< 4k-2a-2b.\label{ineq:k^4 part 1.2}
\end{align}

Let $C_{1.1}$ be the cone defined by the inequalities
~\eqref{ineq:k^4 1}-\eqref{ineq:k^4 3} and \eqref{ineq:k^4 part 1.1}; it has
generators~$v_1, v_4, v_5, v_6,v_8$ and is
not simplicial, but can be triangulated into two simplicial cones:
\begin{itemize}
    \item  Cone A generated by $v_1, v_4, v_6,v_8$, \item Cone B generated by $v_4, v_5, v_6,v_8.$
\end{itemize}
We remove the facet opposite~$v_5$ in cone~$B$; it equals the facet opposite~$v_1$ in cone~$A$ which is included there (see Figure~\ref{fig: part 1 case 1 and 2}).
\begin{figure}[htbp!]
    \centering

    \begin{tikzpicture}

\begin{scope}[on background layer]
\coordinate [label=right:$c$] (z) at (0,4);
\coordinate [label=above left:$v_7$] (v7) at (0,3);
        \circleat{0}{3}{black};
\draw[-stealth] (v7) to (z);
\draw[very thick] (v2) -- (v7);
\draw[very thick] (v2) -- (v1);
\draw[-stealth, very thick] (v2) to (y);

\end{scope}

        \coordinate [label=below right:$v_1$] (v1) at (2,0);
        \circleat{2}{0}{magenta};
        
        \coordinate [label=left:$v_2$] (v2) at (0,0);
        \circleat{0}{0}{black};
        
        \coordinate [label=left:$v_3$] (v3) at (-.75,-.75);
        \circleat{-.75}{-.75}{black};
        
        \coordinate [label=below:$v_4$] (v4) at (.75,-.75);
        \circleat{.75}{-.75}{magenta};

        \coordinate [label=above:$v_6$] (v6) at (2,1);
        \circleat{2}{1}{magenta};

        \coordinate [label=above left:$v_8$] (v8) at (-.75,1.5);
        \circleat{-.75}{1.5}{magenta};
        \filldraw[magenta!50] (v4) -- (v6) -- (v8) --(v4);

\filldraw[magenta!20] (v4) -- (v6) -- (v1) -- (v4);
        \coordinate [label=above:$v_5$] (v5) at (0.75,0.25);
        \circleat{0.75}{0.25}{magenta};

\coordinate [label=left:$a$] (x) at (-1.5,-1.5);
\coordinate [label=above:$b$] (y) at (3,0);

\draw[very thick] (v3) -- (v8) -- (v7) -- (v6);
\draw[very thick] (v4) -- (v3);
\draw[very thick] (v2) -- (v3);

\draw[magenta, very thick] (v1) -- (v6) -- (v5) -- (v4) -- (v1);
\draw[magenta, very thick] (v4) -- (v8) -- (v6);
\draw[magenta, very thick] (v5) -- (v8);

\draw[magenta, very thick] (v4) --  (v6);

\draw[-stealth] (v2) to (x);

\coordinate [label=right:B] (B) at (2,3);
\coordinate (Bloc) at (1,1);
\draw (Bloc) to [out=90,in=180] (B) ;

\coordinate [label=right:A] (A) at (3,1);
\coordinate (Aloc) at (1.8,0.3);
\draw (Aloc) to [out=0,in=180] (A) ;

    \end{tikzpicture}
        \begin{tikzpicture}

\begin{scope}[on background layer]

\draw[-stealth] (v2) to (z);
\draw[very thick] (v2) -- (v7);
\draw[very thick] (v2) -- (v1);
\draw[-stealth] (v2) to (y);
\draw[-stealth] (v2) to (x);
\draw[very thick] (v2) -- (v3);
\draw[very thick] (v8) -- (v7) -- (v6);

\filldraw[teal!50] (v3) -- (v4) --(v8) -- (v3);
\filldraw[teal!50] (v1)-- (v4) --(v8) -- (v1);
\filldraw[pattern color=teal, pattern=crosshatch dots] (v1)-- (v4) --(v8) -- (v1);

\end{scope}

        \coordinate [label=below right:$v_1$] (v1) at (2,0);
        \circleat{2}{0}{teal};

        \coordinate [label=left:$v_3$] (v3) at (-.75,-.75);
        \circleat{-.75}{-.75}{teal};
        
        \coordinate [label=below:$v_4$] (v4) at (.75,-.75);
        \circleat{.75}{-.75}{teal};

        \coordinate [label=above:$v_5$] (v5) at (0.75,0.25);
        \circleat{0.75}{0.25}{black};

        \coordinate [label=above:$v_6$] (v6) at (2,1);
        \circleat{2}{1}{black};

        \coordinate [label=above left:$v_7$] (v7) at (0,3);
        \circleat{0}{3}{black};

        \coordinate [label=above left:$v_8$] (v8) at (-.75,1.5);
        \circleat{-.75}{1.5}{teal};

\coordinate [label=left:$a$] (x) at (-1.5,-1.5);
\coordinate [label=above:$b$] (y) at (3,0);
\coordinate [label=right:$c$] (z) at (0,4);

\draw[teal, very thick] (v3) -- (v8);
\draw[teal, very thick] (v4) -- (v3);

\draw[very thick] (v1) -- (v6) -- (v5) --(v4);
\draw[teal, very thick, dashed] (v4) -- (v1);
\draw[teal, very thick, dashed] (v4) -- (v8);

\draw[very thick] (v5) -- (v8);
\draw[teal, very thick, dashed] (v1) -- (v8);

    \end{tikzpicture}
    \caption{Part 1, Case 1 (left) and Case 2 (right).  
    In Part 1, Case 1, the shared facet for cones A and B is missing in cone B and included in cone A.
    The facet shared by the Part 1, Case 1 cone A and Part 1, Case 2 is included in cone A and missing in Part 1, Case 2.}
    \label{fig: part 1 case 1 and 2}
\end{figure}
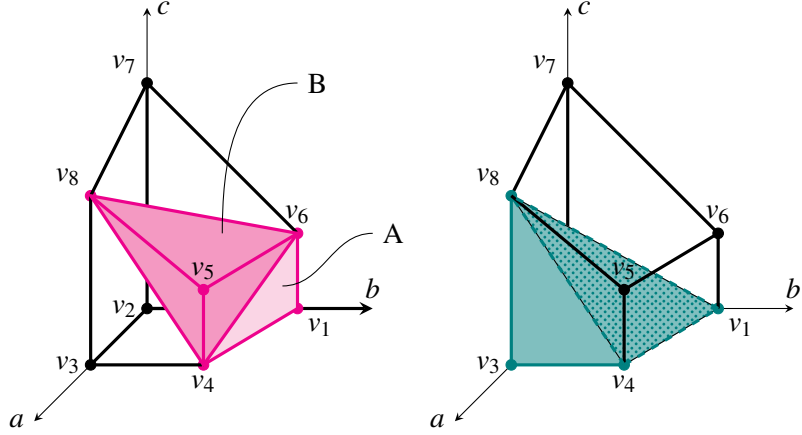

Both Cone A and Cone B are  unimodular, so we can directly compute the integer-point transforms 
\begin{align*}
    \sigma_{C_{1.1A}}(y,z_1,z_2,z_3) &=
\frac{1}{(1-yz_2^2)(1-yz_1z_2)(1-yz_2^2z_3)(1-yz_1z_3^2)} \, , \\
\sigma_{C_{1.1B}}(y,z_1,z_2,z_3) &=
\frac{yz_1z_2z_3}{(1-yz_1z_2)(1-yz_1z_2z_3)(1-yz_2^2z_3)(1-yz_1z_3^2)} \, ,
\end{align*}
which in turn gives
\begin{align*}
    \sigma_{C_{1.1}}(y,z_1,z_2,z_3) = \sigma_{C_{1A}} + \sigma_{C_{1B}}= \frac{1-y^2z_1z_2^3z_3}{(1-yz_2^2)(1-yz_1z_2)(1-yz_2^2z_3)(1-yz_1z_3^2)(1-yz_1z_2z_3)} \, .
\end{align*}
Using methods similar to the ones for first two cones in the Section
\ref{sec:k_1k_2k_3}, we compute the generating function
\begin{align*}
    \genffourk_{1.1}(x,y_1,y_2,y_3,q,t)  =
\frac{1-x^2y_1y_2^3y_3q^2t^8}{(1-xy_2^2q^2t^2)(1-xy_1y_2qt^5)(1-xy_2^2y_3qt^3)(1-xy_1y_3^2qt^4)(1-xy_1y_2y_3t^6)}
\, .
\end{align*}

Next, define $C_{1.2}$ to be the cone defined by inequalities ~\eqref{ineq:k^4
1}-\eqref{ineq:k^4 3} and \eqref{ineq:k^4 part 1.2}; it has generators~$v_1,
v_3, v_4, v_8$, and we remove the facet opposite~$v_3$.
This cone is simplicial (though not unimodular), with integer-point transform
\begin{align*}
    \sigma_{C_{1.2}}(y,z_1,z_2,z_3) &= \frac{yz_1 +
yz_1z_3}{(1-yz_2^2)(1-yz_1)(1-yz_1z_2)(1-yz_1z_3^2)} \, ,
\end{align*}
as~$\Pi(C_{1.2}) \cap \Z^4  =\{(1,1,0,0),(1,1,0,1) \}$. 
We apply the techniques of Section~\ref{subsec: cone C_3} to compute
\begin{align*}
    \genffourk_{1.2}(x,y_1,y_2,y_3,q,t)  = \frac{xy_1q^3t^3 +
xy_1y_3q^2t^4}{(1-xy_2^2q^2t^2)(1-xy_1q^3t^3)(1-xy_1y_2qt^5)(1-xy_1y_3^2qt^4)}
\, .
\end{align*}

\subsection*{Part 2}
We divide the points satisfying $b < 2k-2a$ with $b$ even into three cases:
\begin{align}
    \textbf{Case 1: }& c\geq 3k-a-\frac{3b}{2} \, ,\label{ineq:k^4 part
2.1}\\\textbf{Case 2: }&  3k-3a-\frac{3b}{2}\leq
c<3k-a-\frac{3b}{2} \, ,\label{ineq:k^4 part 2.2}\\ \textbf{Case 3: } & c<
3k-3a-\frac{3b}{2}\label{ineq:k^4 part 2.3} \, .
\end{align}

\subsection*{Part 3}
Similarly, we divide the points satisfying $b < 2k-2a$ with $b$ odd into three cases:
\begin{align}
    \textbf{Case 1: }& c\geq 3k-a-\frac{3(b+1)}{2} + 1 \, ,\label{ineq:k^4 part 3.1}\\\textbf{Case 2: }&  3k-3a-\frac{3(b+1)}{2}+1 \leq c<3k-a-\frac{3(b+1)}{2}+1 \, ,\label{ineq:k^4 part 3.2}\\ \textbf{Case 3: } & c< 3k-3a-\frac{3(b+1)}{2}+1\label{ineq:k^4 part 3.3} \, .
\end{align}

We treat each case for Parts 2 and 3 in parallel. 
We start by disregarding the parity condition on $b$ and consider all points that satisfy the given inequalities. 
Once we determine the relevant cone generating function, we extract the points
with the correct parity condition for~$b$.

\subsection*{Case 1}\label{subsec:part2,3_1}
We start with Part 2, Case 1. 
Let $C_{2.1}$ be the cone defined by the inequalities ~\eqref{ineq:k^4
1}-\eqref{ineq:k^4 3} and \eqref{ineq:k^4 part 2.1}; it has generators~$v_1,
v_6,v_7, v_8$, and the facet opposite~$v_7$ is missing.
\begin{figure}[htbp!]
    \centering
        \begin{tikzpicture}

\begin{scope}[on background layer]

\draw[-stealth] (v2) to (z);
\draw[very thick] (v2) -- (v7);
\draw[very thick] (v2) -- (v1);
\draw[-stealth] (v2) to (y);
\draw[-stealth] (v2) to (x);
\draw[very thick] (v2) -- (v3);

\filldraw[Mulberry!50] (v1) --(v6)--(v7) -- (v8) -- (v1);
\filldraw[pattern color=Mulberry, pattern=crosshatch dots] (v1) -- (v6) -- (v8) -- (v1);

\end{scope}

        \coordinate [label=below right:$v_1$] (v1) at (2,0);
        \circleat{2}{0}{Mulberry};
        
        \coordinate [label=left:$v_2$] (v2) at (0,0);
        \circleat{0}{0}{black};
        
        \coordinate [label=left:$v_3$] (v3) at (-.75,-.75);
        \circleat{-.75}{-.75}{black};
        
        \coordinate [label=below:$v_4$] (v4) at (.75,-.75);
        \circleat{.75}{-.75}{black};

        \coordinate [label=left:$v_5$] (v5) at (0.75,0.25);
        \circleat{0.75}{0.25}{black};

        \coordinate [label=above:$v_6$] (v6) at (2,1);
        \circleat{2}{1}{Mulberry};

        \coordinate [label=above left:$v_7$] (v7) at (0,3);
        \circleat{0}{3}{Mulberry};

        \coordinate [label=above left:$v_8$] (v8) at (-.75,1.5);
        \circleat{-.75}{1.5}{Mulberry};

\coordinate [label=left:$a$] (x) at (-1.5,-1.5);
\coordinate [label=above:$b$] (y) at (3,0);
\coordinate [label=right:$c$] (z) at (0,4);

\draw[very thick] (v3) -- (v8);
\draw[very thick] (v4) -- (v3);
\draw[Mulberry, very thick] (v8) -- (v7) -- (v6) -- (v1)--(v8);

\draw[very thick] (v6) -- (v5);
\draw[Mulberry, very thick] (v8) -- (v6);

\draw[very thick] (v5) --(v4);
\draw[very thick] (v4) -- (v1);

\draw[very thick] (v5) -- (v8);
\draw[Mulberry, very thick] (v1) -- (v8);

    \end{tikzpicture}
    \caption{Part 2, Case 1 and Part 3, Case 1 have the same generators but a
different apices.}
    \label{fig: part 1 case 1}
\end{figure}
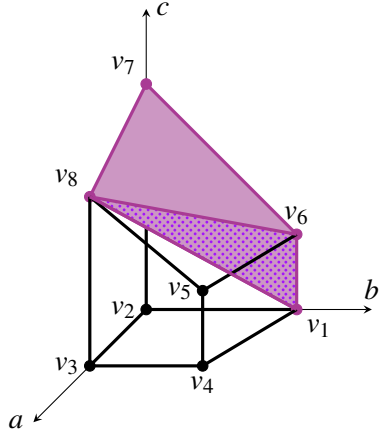
Its fundamental parallelepiped contains the two integer points~$\Pi(C_{2.1}) \cap \Z^4
=\{(1,0,0,3),(1,0,1,2) \}$ and so the corresponding integer-point transform is
\begin{align*}
    \sigma_{C_{2.1}}(y,z_1,z_2,z_3) &= \frac{yz_3^3 + yz_2z_3^2}{(1-yz_2^2)(1-yz_2^2z_3)(1-yz_3^3)(1-yz_1z_3^2)} \, .
    \end{align*}

We are interested in extracting all points~$(k,a,b,c)$ enumerated by this generating function such that~$b$ is even.
Since the coordinate~$b$ of a given point corresponds to the power of~$z_2$ in the monomial, this is equivalent to considering monomial terms where~$z_2$ is raised to an even power.
Since $z_2$ appears only with even powers in the denominator, 
the two numerator terms of~$\sigma_{C_{2.1}}$ correspond to generating functions enumerating points~$(k,a,b,c)$ where~$b$ is even and odd, respectively. 
Since we are only interested in the case that $b$ is even, we simply disregard the term $y z_2 z_3^2$ in the numerator.
It remains to apply the techniques of Sections~\ref{subsec: cone C_1} and~\ref{subsec: cone C_2} to compute the generating function
\begin{align*}
    \genffourk_{2.1}( x,y_1,y_2,y_3,q,t)  &=
\frac{xy_3^3q^3t}{(1-xy_2^2q^2t^2)(1-xy_2^2y_3qt^3)(1-xy_3^3q^3t)(1-xy_1y_3^2qt^4)}
\, .
\end{align*}

Next, we consider Part 3, Case 1, where we eventually require that $b$ is odd.
The inequalities  ~\eqref{ineq:k^4 1}-\eqref{ineq:k^4 3} and \eqref{ineq:k^4 part 3.1} reduce to the following system of inequalities:
\begin{align}
    &0  \leq a\label{ineq: 1},\\ 
    &0 \leq b \label{ineq: 2},\\ 
    &0 \leq c \label{ineq: 3},\\
    &c \leq 3k-a-b \label{ineq: 4},\\
    &b < 2k-2a \label{ineq: 5},\\ 
    &3k-a-\frac{3(b+1)}{2} + 1  \leq c. \label{ineq: 6}
\end{align}
The last inhomogeneous inequality makes computations more subtle.
For example, ~\eqref{ineq: 1},~\eqref{ineq: 5}, and~\eqref{ineq: 6} imply 
\[-\frac{1}{2}\leq 2a -\frac{1}{2} \leq 3k -3k -a + 3a -\frac{1}{2} \leq 3k-a -\frac{3}{2}b -\frac{1}{2}\leq c,\]
but since~$c\in\Z$ the implication is the redundant assumption that~$0\leq c$.

Inequalities~\eqref{ineq: 4} and~\eqref{ineq: 6}  imply~$-1 \leq b$, which is weaker than~\eqref{ineq: 2}. 
To simplify our computations, we first assume~$-1\leq b$ and compute a cone generating function, and then impose the condition~$0\leq b$ by eliminating the points where~$b = -1$.
Hence, we consider the cone~$C_{3.1}$ defined by the following minimal hyperplane description:
\begin{align*}
    &0  \leq a,\\ 
    &c \leq 3k-a-b,\\
    &b < 2k-2a,\\ 
    &3k-a-\frac{3(b+1)}{2} + 1  \leq c.
\end{align*}
The cone~$C_{3.1}$ has generators~$v_1, v_6, v_7, v_8$, and the facet opposite~$v_7$ is missing. 
Though these are the same generators as of $C_{2.1}$, the cone~$C_{3.1}$ is a translation of~$C_{2.1}$ such that its apex is at~$(-\frac{1}{2}, 0, -1, -\frac{1}{2})$ rather than the origin.  
With ~$\Pi(C_{3.1}) \cap \Z^4 = \{ (0, 0, -1 , 1), (1 , 0, 0, 2)\}$, we compute
\begin{align*}
    \sigma_{C_{3.1}}(y,z_1,z_2,z_3)  &= \frac{z_2^{-1}z_3  + y
z_3^2}{(1-yz_2^2)(1-yz_2^2z_3)(1-yz_3^3)(1-yz_1z_3^2)} \, .
    \end{align*}
To impose the condition $0 \le b$, we first subtract all terms where~$b = -1$, i.e.,~$z_2^{ -1 }$:
\begin{align*}\label{genf:3.1}
    \sigma_{C_{3.1}}(y,z_1,z_2,z_3) - \frac{z_2^{-1} z_3}{(1-yz_3^3)(1-yz_1z_3^2)} &= \frac{yz_3^2 - y^2z_2^3z_3^2  + yz_2z_3 + yz_2z_3^2}{(1-yz_2^2)(1-yz_2^2z_3)(1-yz_3^3)(1-yz_1z_3^2) }.
    \end{align*}
Finally, we need to extract the terms for odd $b$, i.e., the monomials with odd powers of~$z_2$.
Again, the denominator contributes only even powers of $z_2$, and so we simply need to disregard the first term in the numerator, giving rise to the resulting generating function
\begin{align*}
   \frac{- y^2z_2^3z_3^2 + yz_2z_3 + yz_2z_3^2
}{(1-yz_2^2)(1-yz_2^2z_3)(1-yz_3^3)(1-yz_1z_3^2)} \, .\end{align*}
From this, we obtain the contribution
\begin{align*}
    \genffourk_{3.1}(x,y_1,y_2,y_3,q,t)  &= \frac{xy_2y_3q^2 t^2 ( -xy_2^2y_3q^2t^3 + q + y_3t)
}{(1-xy_2^2q^2t^2)(1-xy_2^2y_3qt^3)(1-xy_3^3q^3t)(1-xy_1y_3^2qt^4)} \, .
\end{align*}

\subsection*{Case 2}\label{subsec: case 2}
We next consider Part 2, Case 2.
Let~$C_{2.2}$ be the cone defined by the inequalities  ~\eqref{ineq:k^4 1}-\eqref{ineq:k^4 3} and \eqref{ineq:k^4 part 2.2}; it has generators~$v_1, v_3, v_7, v_8$, and we remove the facets opposite to~$v_3, v_7$.
\begin{figure}[htbp!]
    \centering
        \begin{tikzpicture}

\begin{scope}[on background layer]

\draw[-stealth] (v2) to (z);
\draw[very thick] (v2) -- (v7);
\draw[very thick] (v2) -- (v1);
\draw[-stealth] (v2) to (y);
\draw[-stealth] (v2) to (x);
\draw[very thick] (v2) -- (v3);

        \coordinate [label=left:$v_2$] (v2) at (0,0);
        \circleat{0}{0}{black};

\filldraw[Dandelion!50] (v1) --(v3)--(v8) -- (v7) -- (v1);

\draw[Dandelion, very thick] (v1) -- (v8);

\filldraw[pattern color=Dandelion, pattern=crosshatch dots] (v1) -- (v3) -- (v8) -- (v7) -- (v1);

\end{scope}

        \coordinate [label=below right:$v_1$] (v1) at (2,0);
        \circleat{2}{0}{Dandelion};
        
        \coordinate [label=left:$v_3$] (v3) at (-.75,-.75);
        \circleat{-.75}{-.75}{Dandelion};
        
        \coordinate [label=below:$v_4$] (v4) at (.75,-.75);
        \circleat{.75}{-.75}{black};

        \coordinate [label=left:$v_5$] (v5) at (0.75,0.25);
        \circleat{0.75}{0.25}{black};

        \coordinate [label=above:$v_6$] (v6) at (2,1);
        \circleat{2}{1}{black};

        \coordinate [label=above left:$v_7$] (v7) at (0,3);
        \circleat{0}{3}{Dandelion};

        \coordinate [label=above left:$v_8$] (v8) at (-.75,1.5);
        \circleat{-.75}{1.5}{Dandelion};

\coordinate [label=left:$a$] (x) at (-1.5,-1.5);
\coordinate [label=above:$b$] (y) at (3,0);
\coordinate [label=right:$c$] (z) at (0,4);

\draw[Dandelion, very thick] (v3) -- (v8);
\draw[Dandelion, very thick] (v8) -- (v7) -- (v1) -- (v3)--(v8);

\draw[very thick] (v1) -- (v6) -- (v7);

\draw[very thick] (v6) -- (v5);
\draw[very thick] (v4) -- (v3);

\draw[very thick] (v5) --(v4);
\draw[very thick] (v4) -- (v1);

\draw[very thick] (v5) -- (v8);

    \end{tikzpicture}
    \caption{Part 2, Case 2 and Part 3, Case 2 have the same generators, but
different apices; both visible facets are missing.}
    \label{fig: part 2 case 2}
\end{figure}
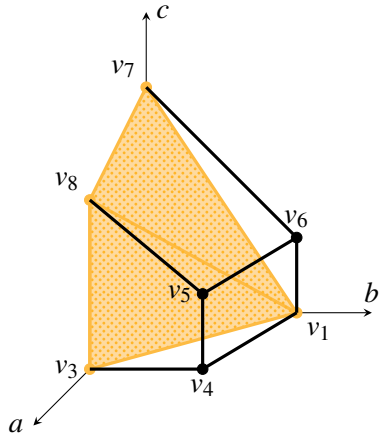
Here, $\Pi(C_{2.2})$ contains four integer points, and the integer-point transform is 
\begin{align*}
    \sigma_{C_{2.2}}(y,z_1,z_2,z_3) = \frac{y^2z_1z_3^3 + y^2z_1z_3^4 +
y^2z_1z_2z_3 + y^2z_1z_2z_3^2}{(1-yz_2^2)(1-yz_1)(1-yz_3^3)(1-yz_1z_3^2)} \, .
    \end{align*}
Imposing the condition that~$b$ is even means disregarding the last two terms in the numerator, from which we obtain the generating function
\begin{align*}
    \genffourk_{2.2}(x,y_1,y_2,y_3,q,t)  &=\frac{x^2y_1y_3^3q^6t^4 +
x^2y_1y_3^4q^5t^5}{(1-xy_2^2q^2t^2)(1-xy_1q^3t^3)(1-xy_3^3q^3t)(1-xy_1y_3^2qt^4)}
\, .
\end{align*}

Part 3, Case 2 proceeds in a similar fashion.
We define the cone~$C_{3.2}$ by the following system of inequalities reduced from inequalities~\eqref{ineq:k^4 1}-\eqref{ineq:k^4 3} and~\eqref{ineq:k^4 part 3.2}:

\begin{align*} 
    0 &\leq b,\\ 
    b &< 2k-2a,\\ 
    c &\geq 3k-3a-\frac{3(b+1)}{2}+1,\\ 
    c &< 3k-a-\frac{3(b+1)}{2}+1.
\end{align*}
The cone~$C_{3.2}$ is a translation of the cone~$C_{2.2}$ such that its apex is at $(0,0,0,-\frac{1}{2})$.
Again, $\Pi(C_{3.2})$ contains four integer points, and the resulting integer-point transform is
\begin{align*}
    \sigma_{C_{3.2}}(y,z_1,z_2,z_3)
    &= \frac{y^2 z_1 z_2 z_3  + y^2 z_1z_2z_3^2+  y^2 z_1 z_3^{3}+ y^2 z_1
z_3^{4}}{(1-yz_2^2)(1-yz_1)(1-yz_3^3)(1-yz_1z_3^2)} \, .
    \end{align*}
To impose the condition that~$b$ is odd, we disregard the last two terms in the numerator, from which we obtain the generating function
\begin{align*}
    \genffourk_{3.2}(q,t, x,y_1,y_2,y_3) &=\frac{ x^2 y_1 y_2 y_3q^5 t^5 (q   +
y_3t)}{(1-xy_2^2q^2t^2)(1-xy_1q^3t^3)(1-q^3txy_3^3)(1-xy_1y_3^2qt^4)} \, .
\end{align*}

\subsection*{Case 3}\label{subsec:part2,3_3}
Let~$C_{2.3}$ be the cone defined by inequalities ~\eqref{ineq:k^4 1}-\eqref{ineq:k^4 3} and \eqref{ineq:k^4 part 2.3}; it has generators~$v_1, v_2, v_3, v_7$, and we discard the facet opposite to~$v_2$. 
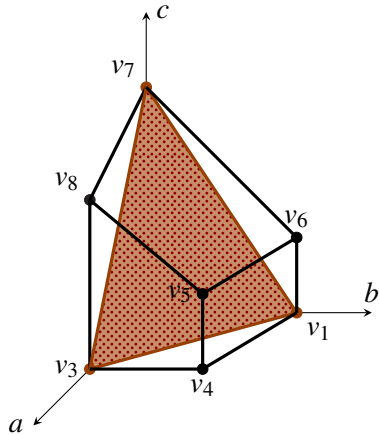
\begin{figure}[htbp!]
    \centering
        \begin{tikzpicture}

\begin{scope}[on background layer]

\draw[-stealth] (v2) to (z);
\draw[very thick] (v2) -- (v7);
\draw[very thick] (v2) -- (v1);
\draw[-stealth] (v2) to (y);
\draw[-stealth] (v2) to (x);
\draw[very thick] (v2) -- (v3);

        \coordinate [label=left:$v_2$] (v2) at (0,0);
        \circleat{0}{0}{black};

\filldraw[RawSienna!50] (v1) --(v3)--(v7)-- (v1);

\filldraw[pattern color=RawSienna, pattern=crosshatch dots] (v1) -- (v3) -- (v7) -- (v1);

\end{scope}

        \coordinate [label=below right:$v_1$] (v1) at (2,0);
        \circleat{2}{0}{RawSienna};
        
        \coordinate [label=left:$v_3$] (v3) at (-.75,-.75);
        \circleat{-.75}{-.75}{RawSienna};
        
        \coordinate [label=below:$v_4$] (v4) at (.75,-.75);
        \circleat{.75}{-.75}{black};

        \coordinate [label=left:$v_5$] (v5) at (0.75,0.25);
        \circleat{0.75}{0.25}{black};

        \coordinate [label=above:$v_6$] (v6) at (2,1);
        \circleat{2}{1}{black};

        \coordinate [label=above left:$v_7$] (v7) at (0,3);
        \circleat{0}{3}{RawSienna};

        \coordinate [label=above left:$v_8$] (v8) at (-.75,1.5);
        \circleat{-.75}{1.5}{Black};

\coordinate [label=left:$a$] (x) at (-1.5,-1.5);
\coordinate [label=above:$b$] (y) at (3,0);
\coordinate [label=right:$c$] (z) at (0,4);

\draw[very thick] (v3) -- (v8);
\draw[very thick] (v8) -- (v7);
\draw[very thick, RawSienna] (v7) -- (v1) -- (v3)--(v7);

\draw[very thick] (v1) -- (v6) -- (v7);

\draw[very thick] (v6) -- (v5);
\draw[very thick] (v4) -- (v3);

\draw[very thick] (v5) --(v4);
\draw[very thick] (v4) -- (v1);

\draw[very thick] (v5) -- (v8);

    \end{tikzpicture}
    \caption{Part 2, Case 3 and Part 3, Case 3 have the same generators but
different apices; the visible face is missing.}
    \label{fig: part 2 case 3}
\end{figure}
Since \[\Pi(C_{2.3}) \cap \Z^4 =
\{(1,0,0,0),(1,0,1,0),(1,0,0,1),(1,0,1,1),(1,0,0,2),(2,0,1,2)\} \, ,\] the
integer-point transform is 
\begin{align*}
    \sigma_{C_{2.3}}(y,z_1,z_2,z_3) &=\frac{y + yz_2 + yz_3 + yz_2z_3 + yz_3^2
+ y^2z_2z_3^2}{(1-yz_2^2)(1-y)(1-yz_1)(1-yz_3^3)} \, . 
    \end{align*}
We are only interested in the points where~$b$ is even, and so we disregard the three terms in the numerator with $z_2$, arriving at the generating function
\begin{align*}
    \genffourk_{2.3}(q,t, x,y_1,y_2,y_3) &= \frac{ xq^4(q^2 + y_3q t +
y_3^2t)}{(1-xy_2^2q^2t^2)(1- xq^6)(1-xy_1q^3t^3)(1-xy_3^3q^3t)} \, .
\end{align*}

For Part 3, Case 3, let~$C_{3.3}$ be the cone defined by inequalities ~\eqref{ineq:k^4 1}-\eqref{ineq:k^4 3} and \eqref{ineq:k^4 part 3.3}; it has the same generators as~$C_{2.3}$, but is translated to place its apex at~$(\frac{1}{6}, 0 ,0,0)$ rather than the origin.
Since \[\Pi(C_{3.3}) \cap \Z^4 = \{(1,0,0,0),(1,0,1,0),(1,0,0,1),(2,0,1,1),(1,0,0,2),(2,0,1,2)\} \, ,\] 
the integer-point transform is
\begin{align*}
    \sigma_{C_{3.3}}(y,z_1,z_2,z_3)   &= \frac{y + yz_2 + yz_3 + y^2z_2z_3 +
yz_3^2 + y^2 z_2z_3^2}{(1-yz_2^2)(1-y)(1-yz_1)(1-yz_3^3)} \, .
    \end{align*}
We are now only interested in the points where~$b$ is odd and so we disregard the three terms in the numerator without $z_2$; the resulting generating function is 
\begin{align*}
    \genffourk_{3.3}(q,t, x,y_1,y_2,y_3) &=  \frac{xy_2q^4t^2 + x^2y_2y_3q^9t^2 +
 x^2 y_2y_3^2q^8t^3}{(1-xy_2^2q^2t^2)(1-xq^6)(1-xy_1q^3t^3)(1-xy_3^3q^3t)} \, .
\end{align*}

Adding the eight generating functions  yields
\begin{align*}
    \genffourk(q,t, x,y_1,y_2,y_3) =&\frac{1-q^2t^8x^2y_1y_2^3y_3}{(1-q^2t^2xy_2^2)(1-qt^5xy_1y_2)(1-qt^3xy_2^2y_3)(1-qt^4xy_1y_3^2)(1-t^6xy_1y_2y_3)}\\
    & {} + \frac{xy_1q^2t^3(q + ty_3)}{(1-q^2t^2xy_2^2)(1-q^3t^3xy_1)(1-qt^5xy_1y_2)(1-qt^4xy_1y_3^2)}\\
& {} + 
\frac{q^3txy_3^3}{(1-q^2t^2xy_2^2)(1-q^3txy_3^3)(1-qt^3xy_2^2y_3)(1-qt^4xy_1y_3^2)}\\
& {} + \frac{q^5t^4x^2y_1y_3^3(q+ty_3)}{(1-q^2t^2xy_2^2)(1-q^3txy_3^3)(1-q^3t^3xy_1)(1-qt^4xy_1y_3^2)}\\
& {} + \frac{ q^4x(q^2 + ty_3^2 + q ty_3)}{(1-q^2t^2xy_2^2)(1- q^6x)(1-q^3txy_3^3)(1-q^3t^3xy_1)}\\
& {} + \frac{q^2 t^2xy_2y_3 ( q + ty_3-q^2t^3xy_2^2y_3 ) }{(1-q^2t^2xy_2^2)(1-q^3txy_3^3)(1-qt^3xy_2^2y_3)(1-qt^4xy_1y_3^2)}\\
& {} + \frac{ x^2 y_1 y_2 y_3q^5 t^5 (q   + y_3t)}{(1-q^2t^2xy_2^2)(1-q^3txy_3^3)(1-q^3t^3xy_1)(1-qt^4xy_1y_3^2)} \\
& {} + \frac{q^4t^2xy_2(1+q^5xy_3+q^4txy_3^2)}{(1-q^2t^2xy_2^2)(1-q^6x)(1-q^3txy_3^3)(1-q^3t^3xy_1)}\, .\\
\end{align*}
Setting $y_2=y_3=y_4=1$ we obtain
\begin{equation*}
  \genffourk(x,1,1,1,q,t)=\frac{N}{(1-q^3tx)(1-qt^3x)(1-q^2t^2x)(1-q^6x)(1-t^6x)} \, ,
\end{equation*}
where 
\begin{align*}
    N=1+& \left(q^5t+qt^5+q^4t^2+q^2t^4+q^4t+qt^4+q^3t^2+q^2t^3+q^3t^3\right)x\\
    & {} + \left(-q^7t^3-q^3t^7+q^6t^5+q^5t^6-q^6t^4-q^4t^6-q^5t^5-q^5t^4-q^4t^5\right)x^2\\
    & {} - \left(q^8t^8+q^9t^6+q^6t^9+q^8t^7+q^7t^8\right)x^3 ,
\end{align*}
and this completes the proof of Theorem~\ref{thm:kkkk}.  


\section{The case \texorpdfstring{$\vec{k} = (k,a,\dots,a)$}{ka}}\label{sec:kaaa}

Based on computational data, we make the following conjecture about $q,t$-symmetry of the refined $q,t$-Catalan numbers $C_{\vec{k}}(q,t)$.

\begin{conjecture}\label{conj:k-symmetry}
If $\vec{k} = (k,a,\ldots,a)$, for positive integers $k,a$, then the refined $q,t$-Catalan numbers $C_{\vec{k}}(q,t)$ are $q,t$-symmetric.
\end{conjecture}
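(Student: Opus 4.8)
The plan is to extend the polyhedral \emph{ansatz} of Sections~\ref{sec:k_1k_2k_3}--\ref{sec:kaaa} to the whole family $\vec k = (k,a,\dots,a)$. Write $\ell$ for the number of copies of $a$, so that $\vec k$ has $\ell+1$ parts and $n = k + \ell a$. A $\vec k$-Dyck path is determined by its red ranks $r_1 = 0, r_2,\dots, r_{\ell+1}$, and these ranks are exactly the lattice points of the rational cone $C_\ell \subseteq \R^{\ell+2}$ in the variables $(k,a,r_2,\dots,r_{\ell+1})$ cut out by
\begin{align*}
0 \le r_2 \le k \qquad \text{and} \qquad 0 \le r_{i+1} \le r_i + a \quad (2 \le i \le \ell).
\end{align*}
Since $\area(D) = r_2 + \cdots + r_{\ell+1}$ is linear on $C_\ell$, the only missing ingredient is a workable description of $\bounce$; once we have it, we can assemble
\[
G_\ell(x,y,q,t) := \sum_{k,a \ge 0} C_{(k,a,\dots,a)}(q,t)\, x^k y^a
\]
as a sum of integer-point transforms and read $q,t$-symmetry off the resulting rational function, exactly as in Theorems~\ref{thm:k1k2k3} and~\ref{thm:kkkk}.

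Concretely, I would run Xin--Zhang's Algorithm~\ref{alg: bounce} on a $(k,a,\dots,a)$-path and record that $\bounce$ is piecewise linear on $C_\ell$, with ceiling corrections. The next step is to subdivide $C_\ell$ into chambers on which $\bounce$ is affine; inside each chamber I triangulate into half-open simplicial subcones, compute the integer-point transform through its fundamental parallelepiped, and dispose of the ceiling/floor terms by the same devices already used in the paper---the parallelepiped splitting of Section~\ref{subsec: cone C_3} and the residue extraction (even/odd powers of a variable, realized by translating the cone's apex to a non-integer point) of Parts~2 and~3 in Section~\ref{sec:k^4}. Substituting for each variable the monomial in $q,t$ that records $\area$ and $\bounce$, and summing the chamber contributions, produces $G_\ell$; comparing $G_\ell$ with its image under $q \leftrightarrow t$ finishes the proof for that $\ell$.

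The hard part is that this chamber analysis does not obviously close up uniformly in $\ell$. As the paper already notes, the bounce formula balloons as the number of parts grows---compare the two-case formula~\eqref{eq: bounce for k_1,k_2,k_3} for three parts with Niu's nine-case formula for four equal parts---so the number of chambers, and hence of cones to process, grows with $\ell$, and a fixed finite case analysis cannot settle every $\ell$ at once. The real content of a proof is therefore a description of the bounce chambers that is \emph{uniform} in $\ell$ and exploits the repetition of $a$. I would look for a recursive structure: peeling off the last length-$a$ north step should relate the bounce data of a $(k,a^{\ell})$-path to that of a $(k,a^{\ell-1})$-path, suggesting either an induction on $\ell$ (with base cases $\ell=1$, the two-part case of Xin--Zhang, $\ell=2$, a specialization of Theorem~\ref{thm:k1k2k3}, and the partial case $\ell=3$, $a\ge k$, of Section~\ref{sec:kaaa}) or, better, a transfer operator acting on a finite ``bounce state.'' Such a state would have to track the current rank together with the residues modulo small integers that the ceiling functions introduce (residues mod $2$ and mod $3$ already appear for four parts), and a functional equation for $\sum_{\ell} G_\ell\, s^\ell$ built from this operator would, if its numerator and denominator stay symmetric, prove the conjecture in one stroke.

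Two caveats guide where to be careful. First, for $\ell \ge 3$ the regime $a < k$ is genuinely new: it is not covered by Section~\ref{sec:kaaa}, and it changes how the single constraint $r_2 \le k$ interacts with the repeated constraints $r_{i+1} \le r_i + a$, so new chambers (and perhaps a separate parametrization) will appear and must be checked to preserve symmetry. Second, rather than expanding $G_\ell$ blindly, I would use the small-$\ell$ data to guess a closed form with a manifestly $q,t$-symmetric product denominator and a symmetric numerator---as in Theorems~\ref{thm:k1k2k3} and~\ref{thm:kkkk}---and then confirm that guess against the cone computation. A direct area/bounce-swapping bijection is, by contrast, presumably out of reach, since such a map would resolve even the classical $\vec k = (1,\dots,1)$ open problem.
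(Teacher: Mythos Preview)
The statement you are trying to prove is a \emph{conjecture} in the paper, not a theorem: the authors do not prove it. They explicitly state that they ``settle this conjecture in the four-dimensional case when $k\le a$'' (via Theorem~\ref{thm:kaaa}) and that ``our current understanding of the bounce formula makes further progress daunting.'' So there is no paper proof to compare against.

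Your proposal is accordingly not a proof but a research plan, and you are candid about this: you correctly identify that the bounce formula's case structure grows with the number of parts, that a uniform-in-$\ell$ description of the chambers is the missing ingredient, and that neither a finite case analysis nor the existing machinery of Sections~\ref{sec:k_1k_2k_3}--\ref{sec:kaaa} settles the general statement. That diagnosis matches the paper's own assessment. The specific ideas you float---a recursive peel-off of the last $a$-step, a transfer operator on a finite bounce state, a functional equation for $\sum_\ell G_\ell s^\ell$---are reasonable directions but none is carried out, and each has a real obstacle: the bounce algorithm is not local in the last step (the rank tableau's columns interact globally), and the moduli appearing in the ceiling corrections are not obviously bounded as $\ell$ grows, so the putative ``finite state'' may not be finite. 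In short, your write-up is an honest outline of where the difficulty lies, but it does not close the gap that keeps Conjecture~\ref{conj:k-symmetry} open in the paper.
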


We settle this conjecture in the four-dimensional case when $k \le a$.  As in the case of $\vec{k}=(k,k,k,k)$, our current understanding of the bounce formula makes further progress daunting. 
Our goal in this section is to compute~\eqref{eq:gfctkaaa}, i.e.,
\[
  \genffourkm(x, y, q, t) := \sum_{ k, m \in \Z_{ \ge 0 } } C_{\overrightarrow{k} =
(k,k+m,k+m,k+m)}(q,t) \, x^k \, y^m.
\]

\begin{theorem}\label{thm:kaaa}
    If $k,m\ge 0$, then
\begin{align*}
    \genffourkm(x,y,q,t)=\frac{M}{(1-xq^6)(1-xt^6)(1-xq^3t)(1-xqt^3)(1-xq^2t^2)(1-yq^3)(1-yt^3)(1-yqt)}\, ,
\end{align*}
where
\begin{align*}
    M=
    (q^{13}t^7 &+ q^7t^{13} + q^{12}t^8 + q^8t^{12} + q^9t^{12} + q^{12}t^9 + q^{11}t^{11} + q^{10}t^{11}+ q^{11}t^{10} + q^9t^{11} + q^{11}t^9  + q^{10}t^{10})x^3y^2 \\
    &- (q^{11}t^5 + q^5t^{11} + q^{10}t^6 + q^6t^{10} + q^8t^9 + q^9t^8 + q^7t^9 + q^9t^7 + q^8t^8 - q^7t^8 - q^8t^7 + q^7t^7)x^2y^2 \\
     &-(q^7t^4 + q^4t^7 + q^6t^5 + q^5t^6)xy^2\\
    &-(q^{12}t^6 + q^6t^{12} + q^8t^{11} + q^{11}t^8 + q^{10}t^8 + q^8t^{10} + q^7t^{11} + q^{11}t^7  + q^7t^{10} + q^{10}t^7 + 2q^9t^9 + q^8t^9 + q^9t^8)x^3y\\ 
    & -(q^{10}t^3 + q^3t^{10} + q^5t^7 + q^7t^5 - q^5t^9 -q^9t^5 + q^4t^9 + q^9t^4 + q^4t^8 + q^8t^4 + q^5t^6 + q^6t^5-q^8t^6 - q^6t^8)x^2y  \\
    &+(qt^8+ q^8t + qt^7  + q^7t + q^6t^3 +q^3t^6 + q^4t^5 + q^5t^4  + q^2t^5 + q^5t^2 + 2q^4t^4 + q^5t^3+ q^3t^5 +2q^3t^4 +2q^4t^3 \\
    & \qquad \;\;\; + q^7t^2 + q^2t^7 + q^6t^2 + q^2t^6)xy \\
    &-(q^2t+qt^2)y\\
    &+(q^9t^6 + q^6t^9 + q^8t^7+ q^7t^8 + q^8t^8 )x^3 + (q^7t^3 + q^3t^7 - q^5t^6 - q^6t^5 + q^4t^6 + q^6t^4 + q^5t^5 + q^4t^5 + q^5t^4)x^2  \\
    &-(qt^5 + q^5t + q^2t^4 + q^4t^2 + qt^4 + q^4t + q^3t^3 + q^2t^3 + q^3t^2)x \;-\; 1 \,.
\end{align*}
\end{theorem}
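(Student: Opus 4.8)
The plan is to mirror the polyhedral computation of Section~\ref{sec:k^4}, now carrying two free size-parameters $k$ and $m$ instead of one. First I would parameterize a $(k,k+m,k+m,k+m)$-Dyck path by its red ranks $(0,r_2,r_3,r_4)$, which by the diagonal constraint satisfy
\[
0 \le r_2 \le k, \qquad 0 \le r_3 \le r_2 + k + m, \qquad 0 \le r_4 \le r_3 + k + m.
\]
Introducing the auxiliary coordinates $a = k - r_2$, $b = r_2 + k + m - r_3$, and $c = r_3 + k + m - r_4$ (as in Section~\ref{sec:k^4}, but with $k+m$ in place of the last three step lengths) turns these into $0 \le a \le k$, $0 \le b \le 2k + m - a$, and $0 \le c \le 3k + 2m - a - b$, and yields the linear statistic $\area(D) = r_2 + r_3 + r_4 = 6k + 3m - 3a - 2b - c$. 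These inequalities carve out a $5$-dimensional rational cone $C$ in the coordinates $(k,m,a,b,c)$, whose extreme rays I would read off from the vertices of a polytopal slice as in Section~\ref{sec:k^4}; setting $m=0$ recovers the cone of that section, and correspondingly the $x$-factors of the claimed denominator coincide with the denominator of Theorem~\ref{thm:kkkk}, a useful consistency check.

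The hard part is the bounce. To weight each lattice point by $t^{\bounce(D)}$ I need an explicit, piecewise-linear bounce formula for the $(k,k+m,k+m,k+m)$ pattern; this does not appear in the excerpt and must be produced by running Algorithm~\ref{alg: bounce} in this generality, extending Niu's analysis of the $(k,k,k,k)$ case. I expect a formula with several cases---governed by inequalities among $a,b,c,m$ together with a parity condition on $b$ and ceiling terms in $c$---each linear on a subregion of $C$. Checking that these cases genuinely partition $C$, and understanding why the hypothesis $k \le a$ (automatic here, since $a = k+m$ with $m \ge 0$) keeps the case analysis from branching further, is the principal obstacle; everything downstream is bookkeeping.

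Granting the bounce formula, I would subdivide $C$ into simplicial half-open cones, one per bounce case, chosen so that $\bounce$ is linear on each, with the ceiling and $b$-parity terms handled by the numerator-splitting and auxiliary-variable bookkeeping of Section~\ref{sec:k^4}. For each piece I would compute the integer-point transform $\sigma_{C_i}$ via its fundamental parallelepiped; as in the $(k,k,k,k)$ case I expect most pieces to be unimodular or to have only a few points in $\Pi(C_i) \cap \Z^5$. Assigning each shared facet to exactly one cone through the half-open structure avoids inclusion--exclusion corrections.

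Finally, I would turn each $\sigma_{C_i}$ into a contribution to $\genffourkm$ by the linear substitution recording the weight $x^k y^m q^{\area(D)} t^{\bounce(D)}$ of a lattice point $(k,m,a,b,c)$, sum the contributions, specialize the auxiliary $a,b,c$ variables to $1$, and simplify to the claimed rational function. The denominator should fall out directly from the rays of $C$, and the numerator $M$ from combining the parallelepiped data. Because the substitution treats $\area$ and $\bounce$ on an equal footing and the whole construction is built symmetrically, the end result will be manifestly invariant under $q \leftrightarrow t$, settling Conjecture~\ref{conj:k-symmetry} in the four-dimensional case with $k \le a$.
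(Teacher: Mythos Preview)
Your plan is exactly the paper's approach: the same coordinates $(k,m,a,b,c)$, the same defining inequalities for the five-dimensional cone $C$, a piecewise bounce formula obtained by running Algorithm~\ref{alg: bounce} (the paper in fact writes this out explicitly, in thirteen cases organized into five Parts), and the same cone-by-cone integer-point transforms, summed and then specialized at $z_1=z_2=z_3=1$.

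Two small corrections to your expectations. First, the $q,t$-symmetry is \emph{not} built into the construction: area and bounce are handled very asymmetrically throughout (area is a single linear form, bounce a thirteen-branch piecewise-linear function), and the symmetry only becomes visible after all the pieces are summed and the resulting rational function is simplified---that, not any a priori symmetry, is what proves the conjecture in this case. Second, the eight-factor denominator does not read off directly from the twelve rays of $C$; several ray-contributions (e.g., $(1-xq^3t^3)$, $(1-xqt^4)$, $(1-yqt^2)$, $(1-xqt^5)$) cancel against numerator terms in the final common-denominator simplification.
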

Note once more the $q,t$-symmetry in this formula. 

A $(k,k+m,k+m,k+m)$-Dyck path $D$ can be uniquely identified with the vector $(k,m,a,b,c)$ based on the red ranks $(0,r_2,r_3,r_4)$ where
\[
r_2=k-a, \quad r_3=2k+m-a-b, \quad r_4=3k+2m-a-b-c.
\]
We compute $\text{area}(D)=6k+3m-3a-2b-c$ and
    \[
    \text{bounce(D)}=
\left\{
\begin{array}{ll}
   3a  &  b,c=0,\\
   3a+\left\lceil \frac{c}{3}\right\rceil  & b=0, \, 0<c\le 3(k-a),\\
   2a+k+\left\lceil\frac{c-3(k-a)}{2}\right\rceil & b=0, \, c>3(k-a),\\
   3a+2\left\lceil\frac{b}{2}\right\rceil & 0<b\le 2(k-a),\, c=0,\\
   3a+2\left\lceil\frac{b}{2}\right\rceil+\left\lceil\frac{c}{3}\right\rceil & 0<b\le 2(k-a),\, b \text{ is even and } \\ &\quad \quad 0< c\le 3\left(k-a-\left\lceil\frac{b}{2}\right\rceil\right),\\
   3a+2\left\lceil\frac{b}{2}\right\rceil+\left\lceil\frac{c-1}{3}\right\rceil & 0<b\le 2(k-a),\, b \text{ is odd and } \\
   & \quad \quad 0\le c-1\le 3\left(k-a-\left\lceil\frac{b}{2}\right\rceil\right),\\
   2a+\left\lceil\frac{b}{2}\right\rceil+k+\left\lceil\frac{c-3(k-a-\left\lceil\frac{b}{2}\right\rceil)}{2}\right\rceil & 0<b\le 2(k-a),\, b \text{ is even and } \, \\
   & \quad \quad 3\left(k-a-\left\lceil\frac{b}{2}\right\rceil\right)<c\le 3(k-\left\lceil\frac{b}{2}\right\rceil)-a+2m,\\
   2a+\left\lceil\frac{b}{2}\right\rceil+k+ &  0<b\le 2(k-a),\, b \text{ is odd and } \, \\
   \quad \quad \left\lceil\frac{c-1-3\left(k-a-\left\lceil\frac{b}{2}\right\rceil\right)}{2}\right\rceil& \quad \quad 3\left(k-a-\left\lceil\frac{b}{2}\right\rceil\right)<c-1\le 3(k-\left\lceil\frac{b}{2}\right\rceil)-a+2m,\\
   c-2k+4a+4\left\lceil\frac{b}{2}\right\rceil-m & 0<b\le 2(k-a),\, b \text{ is even and } \\
   & \quad \quad c> 3(k-\left\lceil\frac{b}{2}\right\rceil)-a+2m,\\
   c-1-2k+4a+4\left\lceil\frac{b}{2}\right\rceil-m & 0<b\le 2(k-a),\, b \text{ is odd and } \\
   & \quad \quad c-1> 3(k-\left\lceil\frac{b}{2}\right\rceil)-a+2m,\\
   5a+2b-2k+\left\lceil\frac{c}{2}\right\rceil & b>2(k-a), \, 0\le c\le 2(2k-a+m-b),\\
   6a+3b-4k+c-m & b>2(k-a), \,c>2(2k-a+m-b).
\end{array}\right.
\]
By definition, all vectors $(k,m a, b, c)$ corresponding
to $(k, k+m, k+m, k+m)$-Dyck paths must satisfy the following inequalities:
\begin{align}
    0&\le a\le k \label{ineq: km all satisfy 1},\\
    0&\le m \label{ineq: km all satisfy 2},\\
    0&\le b\le 2k+m-a \label{ineq: km all satisfy 3},\\
    0&\le c\le 3k+2m-a-b. \label{ineq: km all satisfy 4}
\end{align}
This system of inequalities corresponds to the cone $C$ generated by 
\begin{align*}
    \begin{bmatrix}
        1\\0\\0\\2\\0
    \end{bmatrix},
    \begin{bmatrix}
        1\\0\\1\\0\\0
    \end{bmatrix},
    \begin{bmatrix}
        1\\0\\0\\0\\0
    \end{bmatrix},
    \begin{bmatrix}
        0\\1\\0\\0\\0
    \end{bmatrix},
    \begin{bmatrix}
        0\\1\\0\\1\\0
    \end{bmatrix},
    \begin{bmatrix}
        1\\0\\1\\1\\0
    \end{bmatrix},
    \begin{bmatrix}
        1\\0\\1\\1\\1
    \end{bmatrix},
    \begin{bmatrix}
        1\\0\\0\\2\\1
    \end{bmatrix},
    \begin{bmatrix}
        1\\0\\1\\0\\2
    \end{bmatrix},
    \begin{bmatrix}
        1\\0\\0\\0\\3
    \end{bmatrix},
    \begin{bmatrix}
        0\\1\\0\\0\\2
    \end{bmatrix},
    \begin{bmatrix}
        0\\1\\0\\1\\1
    \end{bmatrix}.
\end{align*}
We denote these cone generators by $v_1,\dots,v_{12}$. 
Similarly to the $(k,k,k,k)$ case we triangulate $C$ depending on the bounce cases listed above.  We denote
\begin{align}
    \textbf{Part 1: } & b=0 \label{ineq: part 1},\\
    \textbf{Part 2: } & 0<b\le 2(k-a) \, \text{and} \, c=0 \label{ineq: part 2},\\
    \textbf{Part 3: } & 0<b\le 2(k-a), \, c\ne 0,\, \text{and $b$ is even}\label{ineq: part 3},\\
    \textbf{Part 4: } & 0<b\le 2(k-a),\,  c\ne 0,\, \text{ and $b$ is odd} \label{ineq: part 4},\\
    \textbf{Part 5: } & b>2(k-a). \label{ineq: part 5}
\end{align}
Some parts are further split into subcases.
We do not exhibit the details of the ensuing computations, as the generating-function
methods are similar to those for the $\vec{k}=(k_1,k_2,k_3)$ and $\vec{k}=(k,k,k,k)$ cases above.
However, we do give the result for each subcase.

\subsection*{Part 1}
We divide the points with $b=0$ into three cases: 
\begin{align}
    \textbf{Case 1: }& c=0\label{ineq: km p1 c1},\\
    \textbf{Case 2: }& 0<c\le 3(k-a)\label{ineq: km p1 c2},\\
    \textbf{Case 3: }& c>3(k-a).\label{ineq: km p1 c3}
\end{align}
Case 1 gives rise to the unimodular cone $C_{1.1}$ defined by the inequalities~\ref{ineq: km all satisfy 1}-\ref{ineq: km all satisfy 4},~\ref{ineq: part 1}, and~\ref{ineq: km p1 c1},
with generating function 
\[
\genffourkm_{1.1}(x,y,z_1,q,t)=\frac{1}{(1-xz_1q^3t^3)(1-yq^3)(1-xq^6)}\, .
\]
Case 2 gives the simplicial cone $C_{1.2}$ defined by the inequalities~\ref{ineq: km all satisfy 1}-\ref{ineq: km all satisfy 4},~\ref{ineq: part 1} and~\ref{ineq: km p1 c2}, 
with generating function
\[
\genffourkm_{1.2}(x,y,z_1,z_3,q,t)=\frac{xz_3q^3t(q^2+z_3q+z_3^2)}{(1-xz_3^3q^3t)(1-xq^6)(1-xz_1q^3t^3)(1-yq^3)}\, .
\]
Case 3 gives the cone $C_{1.3}$ defined by the inequalities~\ref{ineq: km all satisfy
1}-\ref{ineq: km all satisfy 4},~\ref{ineq: part 1} and~\ref{ineq: km p1 c3}. It needs to
be triangulated, giving rise to the generating function
\begin{align*}
\genffourkm_{1.3}(x,y,z_1,z_3,q,t)
&= \frac{yz_3q^2t+yz_3^2qt}{(1-xz_3^3q^3t)(1-yq^3)(1-xz_1q^3t^3)(1-yz_3^2qt)} \\
&\qquad + \frac{xz_1z_3q^2t^4+xz_1z_3^2qt^4}{(1-xz_3^3q^3t)(1-xz_1q^3t^3)(1-xz_1z_3^2qt^4)(1-yz_3^2qt)}
\, .
\end{align*}

\subsection*{Part 2}
Let $C_2$ be the simplicial cone defined by the inequalities~\ref{ineq: km all satisfy 1}-\ref{ineq: km all satisfy 4} and~\ref{ineq: part 2}. 
It gives rise to the generating function 
\[
\genffourkm_2(x,y,z_1,z_2,z_3,q,t)=\frac{xz_2q^4t^2 + xz_2^2q^2t^2}{(1-xz_1q^3t^3)(1-yq^3)(1-xz_2^2q^2t^2)(1-xq^6)}\,.
\]

\subsection*{Part 3}
We divide the points with $b$ even, $c\ne 0$, and $0<b\le 2(k-a)$ into three cases:
\begin{align}
    \textbf{Case 1: }& 0<c\le 3\left(k-a-\frac{b}{2}\right)\label{ineq: part 3 case 1}\, ,\\
    \textbf{Case 2: }& 3\left(k-a-\frac{b}{2}\right)<c\le 3\left(k-\frac{b}{2}\right)-a+2m\label{ineq: part 3 case 2}\, ,\\
    \textbf{Case 3: }& c>3\left(k-\frac{b}{2}\right)-a+2m\, .\label{ineq: part 3 case 3}
\end{align}
Case 1 gives the simplicial cone $C_{3.1}$ defined by the inequalities~~\ref{ineq: km all
satisfy 1}-\ref{ineq: km all satisfy 4},~\ref{ineq: part 3} and~\ref{ineq: part 3 case 1};
we need only consider the points with even $b$ values.  
This gives the generating function 
\[
\genffourkm_{3.1}(x,y,z_1,z_2,z_3,q,t)=\frac{x^2z_2^2z_3q^7t^3+x^2z_2^2z_3^2q^6t^3+x^2z_2^2z_3^3q^5t^3}{(1-yq^3)(1-xz_2^2q^2t^2)(1-xz_1q^3t^3)(1-xz_3^3q^3t)(1-xq^6)}\, .
\]
Case 2 gives the cone $C_{3.2}$ defined by inequalities~\ref{ineq: km all satisfy
1}-\ref{ineq: km all satisfy 4},~\ref{ineq: part 3} and~\ref{ineq: part 3 case 2}; again we
need only consider the points with even $b$ value.  
After a triangulation, we compute the generating function
\begin{align*}
\genffourkm_{3.2}(x,y,z_1,z_2,z_3,q,t)
&=
\frac{xyz_2^2z_3q^4t^3+xyz_2^2z_3^2q^3t^3}{(1-yq^3)(1-yz_3^2qt)(1-xz_3^3q^3t)(1-xz_2^2q^2t^2)(1-xz_1q^3t^3)}
\\
&\qquad + \frac{x^2z_1z_2^2z_3q^4t^6+x^2z_1z_2^2z_3^2q^3t^6}{(1-yz_3^2qt)(1-xz_3^3q^3t)(1-xz_2^2q^2t^2)(1-xz_1q^3t^3)(1-xz_1z_3^2qt^4)}\, .
\end{align*}
Case 3 gives the simplicial cone $C_{3.3}$  defined by the inequalities~\ref{ineq: km all satisfy 1}-\ref{ineq: km all satisfy 4} ,~\ref{ineq: part 3} and~\ref{ineq: part 3 case 3}; once
more we need only consider the points with even $b$ value.  
Its generating function is
\[
\genffourkm_{3.3}(x,y,z_1,z_2,z_3,q,t)=\frac{xz_2^2z_3qt^3}{(1-xz_3^3q^3t)(1-yz_3^2qt)(1-xz_1z_3^2qt^4)(1-xz_2^2q^2t^2)(1-xz_2^2z_3qt^3)}\, .
\]

\subsection*{Part 4}
We similarly divide the points with $b$ odd and $0<b\le 2(k-a)$ into three cases:
\begin{align}
 \textbf{Case 1: }  & 0\le c-1\le 3\left(k-a-\frac{b+1}{2}\right) \label{ineq: part 4 case 1}\, ,\\
     \textbf{Case 2: } & 3\left(k-a-\frac{b+1}{2}\right)<c-1\le 3\left(k-\frac{b+1}{2}\right)-a+2m \label{ineq: part 4 case 2}\, ,\\
    \textbf{Case 3: } & 3\left(k-\frac{b+1}{2}\right)-a+2m<c-1\, . \label{ineq: part 4 case 3}
\end{align}
For Case 1, we consider the points with odd $b$ value of the cone $C_{4.1}$ defined by the inequalities~\ref{ineq: km all satisfy 1}-\ref{ineq: km all satisfy 4},~\ref{ineq: part 4} and~\ref{ineq: part 4 case 1}, the last of which in this context simplifies to  
\begin{equation*}
0<c<3k-3a-\frac{3b}{2}\, .
\end{equation*}
It comes with the generating function 
\[
\genffourkm_{4.1}(x,y,z_1,z_2,z_3,q,t)=\frac{xz_2z_3q^3t^2+x^2z_2z_3^2q^8t^3+x^2z_2z_3^3q^7t^3}{(1-yq^3)(1-xz_2^2q^2t^2)(1-xz_1q^3t^3)(1-xz_3^3q^3t)(1-xq^6)}\, .
\]
Similarly, Case 2 is captured by the points with odd $b$ value of the cone $C_{4.2}$ defined by the inequalities~\ref{ineq: km all satisfy 1}-\ref{ineq: km all satisfy 4},~\ref{ineq: part 4}, and~\ref{ineq: part 4 case 2}, the last of which in this context simplifies to
\[
3k-3a-\frac{3b}{2}\le c<3k-\frac{3b}{2}-a+2m\, .
\]  
After a triangulation, its generating function is
\begin{align*}
  \genffourkm_{4.2}(x,y,z_1,z_2,z_3,q,t)
  &=
\frac{xyz_2z_3^2q^5t^3+xyz_2z_3^3q^4t^3}{(1-xz_3^3q^3t)(1-xz_2^2q^2t^2)(1-yz_3^2qt)(1-xz_1q^3t^3)(1-yq^3)}\\
&\qquad {} + \frac{x^2z_1z_2z_3^2q^5t^6+x^2z_1z_2z_3^3q^4t^6}{(1-xz_3^3q^3t)(1-xz_2^2q^2t^2)(1-yz_3^2qt)(1-xz_1q^3t^3)(1-xz_1z_3^2qt^4)}\, .
\end{align*}
For Case 3, we consider the points with odd $b$ value of the cone $C_{4.3}$ defined by the inequalities~\ref{ineq: km all satisfy 1}-\ref{ineq: km all satisfy 4},~\ref{ineq: part 4}, and~\ref{ineq: part 4 case 3}, the last of which simplifies to
\[
c\ge 3k-\frac{3b}{2}-a+2m\, .
\]
The corresponding generating function is 
\[
\genffourkm_{4.3}(x,y,z_1,z_2,z_3,q,t)=\frac{xz_2z_3^2q^2t^3}{(1-xz_3^3q^3t)(1-yz_3^2qt)(1-xz_1z_3^2qt^4)(1-xz_2^2q^2t^2)(1-xz_2^2z_3qt^3)}\, .
\]

\subsection*{Part 5}
We divide the points with $b>2(k-a)$ into two cases: 
\begin{align}
    \textbf{Case 1: } & 0\le c\le 2(2k-a+m-b)\, ,\label{ineq: km p5 c1} \\
    \textbf{Case 2: } & c>2(2k-a+m-b)\, . \label{ineq: km p5 c2}
\end{align}
For Case 1, we consider the cone $C_{5.1}$ defined by the inequalities~\ref{ineq: km all satisfy 1}-\ref{ineq: km all satisfy 4},~\ref{ineq: part 5}, and~\ref{ineq: km p5 c1}. 
After a triangulation, we compute its generating function as
\begin{align*}
  \genffourkm_{5.1}(x,y,z_1,z_2,z_3,q,t)
  &=\frac{yz_2qt^2 +
y^2z_2z_3q^3t^3}{(1-yq^3)(1-yz_3^2qt)(1-yz_2qt^2)(1-xz_2^2q^2t^2)(1-xz_1q^3t^3)} \\
  &\qquad {} + \frac{xyz_1z_2z_3q^3t^6+xyz_1z_2z_3^2q^2t^6}{(1-yz_3^2qt)(1-yz_2qt^2)(1-xz_2^2q^2t^2)(1-xz_1q^3t^3)(1-xz_1z_3^2qt^4)}\\
  &\qquad {} + \frac{xz_1z_2 qt^5 + x^2z_1^2z_2z_3q^3t^9}{(1-yz_2qt^2)(1-xz_2^2q^2t^2)(1-xz_1q^3t^3)(1-xz_1z_3^2qt^4)(1-xz_1z_2qt^5)}\, .
\end{align*}
Finally, for Case 2, let $C_{5.2}$ be the cone defined by the inequalities~\ref{ineq: km all satisfy 1}-\ref{ineq: km all satisfy 4},~\ref{ineq: part 5} and~\ref{ineq: km p5 c2}. 
With another triangulation, we compute the generating function 
\begin{align*}
    \genffourkm_{5.2}(x,y,z_1,z_2,z_3,q,t)
    &=\frac{yz_2z_3t^3}{(1-yz_3^2qt)(1-yz_2qt^2)(1-yz_2z_3t^3)(1-xz_2^2q^2t^2)(1-xz_1z_3^2qt^4)}\\
    &\qquad {} + \frac{xyz_2^3z_3^2qt^6}{(1-yz_3^2qt)(1-yz_2z_3t^3)(1-xz_2^2q^2t^2)(1-xz_2^2z_3qt^3)(1-xz_1z_3^2qt^4)}\\
    &\qquad {} + \frac{xyz_1z_2^2z_3qt^8}{(1-yz_2qt^2)(1-yz_2z_3t^3)(1-xz_2^2q^2t^2)(1-xz_1z_3^2qt^4)(1-xz_1z_2qt^5)}\\
    &\qquad {} + \frac{x^2z_1z_2^3z_3q^2t^8}{(1-yz_2z_3t^3)(1-xz_2^2q^2t^2)(1-xz_2^2z_3qt^3)(1-xz_1z_3^2qt^4)(1-xz_1z_2qt^5)}\\
    &\qquad {} + \frac{xz_1z_2z_3t^6}{(1-yz_2z_3t^3)(1-xz_2^2z_3qt^3)(1-xz_1z_3^2qt^4)(1-xz_1z_2qt^5)(1-xz_1z_2z_3t^6)}\, .
\end{align*}

Setting $z_1,z_2,z_3=1$ and adding all the generating functions yields Theorem~\ref{thm:kaaa}.


\section{Further directions}\label{sec:further}

\subsection{Revisiting Xin and Zhang's conjecture}
As previously mentioned, Xin and Zhang~\cite{xin2022qtsymmetry} conjectured that if~$\lambda=((a+1)^s,a^{n-s})$ with~$0\leq s\leq n$, then $C_\lambda(q,t)$ is $q,t$-symmetric. 
Since $C_\lambda(q,t)$ involves taking a sum over possibly multiple $C_{\vec{k}}(q,t)$, it is interesting to further ask for which $\vec{k}$ we have $q,t$-Catalan symmetry. 
For example, Xin and Zhang note that $C_{\lambda = (1,1,1,3)}(q,t)$ is not $q,t$-symmetric. 
Breaking this up into the constituent $C_{\vec{k}}(q,t)$, we find that 
\[ C_{\vec{k} = (1,1,1,3)}(q,t) \qquad \text{ and } \qquad C_{\vec{k} =
(3,1,1,1)}(q,t) \] 
are $q,t$-symmetric (see Figure~\ref{fig:(1,1,1,3)and(3,1,1,1)}), but 
\[ C_{\vec{k} = (1,1,3,1)}(q,t) \qquad \text{ and } \qquad C_{\vec{k} =
(1,3,1,1)}(q,t) \] 
are not $q,t$-symmetric (see Figure~\ref{fig:(1,1,3,1)and(1,3,1,1)}).
Since the asymmetries present in $C_{\vec{k} = (1,1,3,1)}(q,t)$ and $C_{\vec{k} = (1,3,1,1)}(q,t)$ do not cancel out, $C_{\lambda = (1,1,1,3)}(q,t)$ is not $q,t$-symmetric. 

\begin{figure}[ht]
    \centering
    \small
\createTable{6}{1/6/0, 1/5/1, 1/4/2, 1/3/3, 1/2/4, 1/1/5, 1/0/6, 1/4/1, 1/3/2, 1/2/3, 1/1/4, 1/3/1, 1/2/2, 1/1/3, $q^0$/0/-1, $q^1$/1/-1, $q^2$/2/-1, $q^3$/3/-1, $q^4$/4/-1, $q^5$/5/-1, $q^6$/6/-1, $t^0$/-1/0, $t^1$/-1/1, $t^2$/-1/2, $t^3$/-1/3, $t^4$/-1/4, $t^5$/-1/5, $t^6$/-1/6}
\createTable{12}{1/{12}/0, 1/{11}/1, 1/{10}/2, 1/9/3, 1/8/4, 1/7/5, 1/6/6, 1/5/7, 1/4/8, 1/3/9, 1/2/{10}, 1/1/{11}, 1/0/{12}, 1/{10}/1, 1/9/2, 1/8/3, 1/7/4, 1/6/5, 1/5/6, 1/4/7, 1/3/8, 1/2/9, 1/1/{10}, 1/9/1, 2/8/2, 3/7/3, 3/6/4, 3/5/5, 3/4/6, 3/3/7, 2/2/8, 1/1/9, 1/6/3, 1/5/4, 1/4/5, 1/3/6, $q^0$/0/-1, $q^1$/1/-1, $q^2$/2/-1, $q^3$/3/-1, $q^4$/4/-1, $q^5$/5/-1, $q^6$/6/-1, $q^7$/7/-1, $q^8$/8/-1, $q^9$/9/-1, $q^{10}$/10/-1, $q^{11}$/11/-1, $q^{12}$/12/-1, $t^0$/-1/0, $t^1$/-1/1, $t^2$/-1/2, $t^3$/-1/3, $t^4$/-1/4, $t^5$/-1/5, $t^6$/-1/6, $t^7$/-1/7, $t^8$/-1/8, $t^9$/-1/9, $t^{10}$/-1/10, $t^{11}$/-1/11, $t^{12}$/-1/12}
    \caption{Coefficients of $C_{\vec{k} = (1,1,1,3)}(q,t)$ (left) and of $C_{\vec{k} = (3,1,1,1)}(q,t)$ (right). Both are $q,t$-symmetric. Reading the tables: the lower left box is $(0,0)$, the coefficient in box $(i,j)$ corresponds to the summand $q^i t^j$. }
    \label{fig:(1,1,1,3)and(3,1,1,1)}
\end{figure}

\begin{figure}[ht]
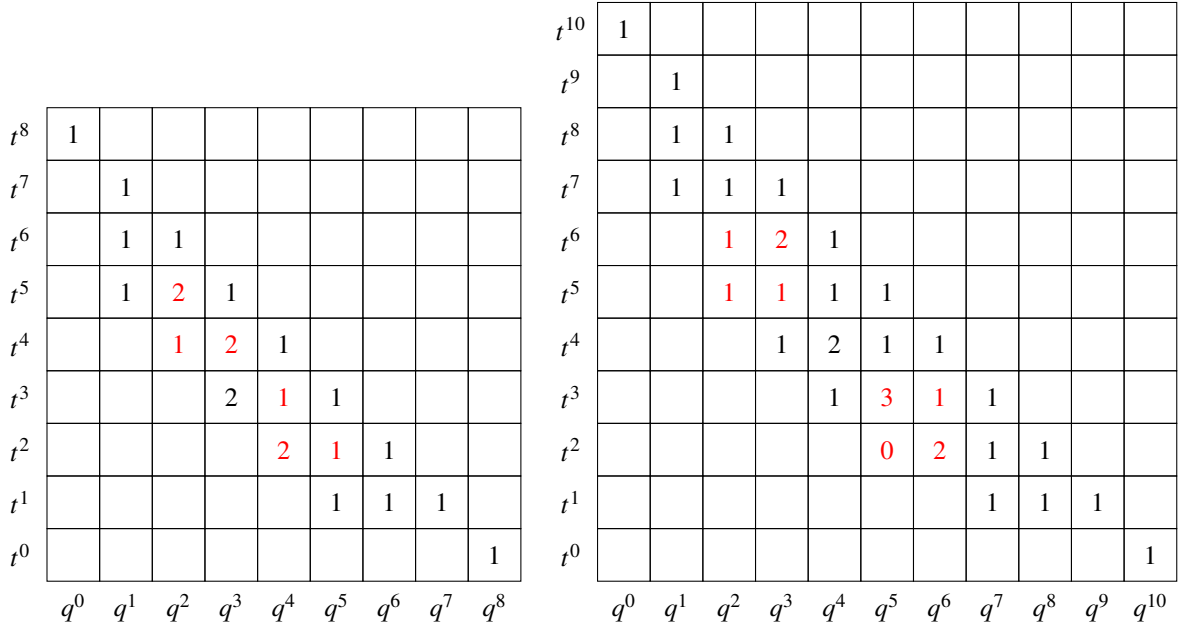

    \centering
    \small
\createTable{8}{1/8/0, 1/7/1, 1/6/2, 1/5/3, 1/4/4, 1/3/5, 1/2/6, 1/1/7, 1/0/8, 1/6/1, \textcolor{red}{1}/5/2, \textcolor{red}{1}/4/3, \textcolor{red}{2}/3/4, \textcolor{red}{2}/2/5, 1/1/6, 1/5/1, \textcolor{red}{2}/4/2, 2/3/3, \textcolor{red}{1}/2/4, 1/1/5, $q^0$/0/-1, $q^1$/1/-1, $q^2$/2/-1, $q^3$/3/-1, $q^4$/4/-1, $q^5$/5/-1, $q^6$/6/-1, $q^7$/7/-1, $q^8$/8/-1, $t^0$/-1/0, $t^1$/-1/1, $t^2$/-1/2, $t^3$/-1/3, $t^4$/-1/4, $t^5$/-1/5, $t^6$/-1/6, $t^7$/-1/7, $t^8$/-1/8}
\createTable{10}{1/{10}/0, 1/9/1, 1/8/2, 1/7/3, 1/6/4, 1/5/5, 1/4/6, 1/3/7, 1/2/8, 1/1/9, 1/0/{10}, 1/8/1, 1/7/2, \textcolor{red}{1}/6/3, 1/5/4, 1/4/5, \textcolor{red}{2}/3/6, 1/2/7, 1/1/8, 1/7/1, \textcolor{red}{2}/6/2, \textcolor{red}{3}/5/3, 2/4/4, \textcolor{red}{1}/3/5, \textcolor{red}{1}/2/6, 1/1/7, 1/4/3, 1/3/4, \textcolor{red}{1}/2/5, \textcolor{red}{0}/5/2, $q^0$/0/-1, $q^1$/1/-1, $q^2$/2/-1, $q^3$/3/-1, $q^4$/4/-1, $q^5$/5/-1, $q^6$/6/-1, $q^7$/7/-1, $q^8$/8/-1, $q^9$/9/-1, $q^{10}$/10/-1, $t^0$/-1/0, $t^1$/-1/1, $t^2$/-1/2, $t^3$/-1/3, $t^4$/-1/4, $t^5$/-1/5, $t^6$/-1/6, $t^7$/-1/7, $t^8$/-1/8, $t^9$/-1/9, $t^{10}$/-1/10}
    \caption{Coefficients of $C_{\vec{k} = (1,1,3,1)}(q,t)$ (left) and of $C_{\vec{k} = (1,3,1,1)}(q,t)$ (right).
    Both are not $q,t$-symmetric, with asymmetries highlighted in red.}
    \label{fig:(1,1,3,1)and(1,3,1,1)}
\end{figure}

This contrasts with examples which fall within Xin and Zhang's conjecture, such as $C_{\lambda = (1,1,1,2)}(q,t)$ which is $q,t$-symmetric. 
There, 
\[ C_{\vec{k} = (1,1,1,2)}(q,t) \qquad \text{ and } \qquad C_{\vec{k} =
(2,1,1,1)}(q,t) \] 
are $q,t$-symmetric (see Figure~\ref{fig:(1,1,1,2)and(2,1,1,1)}), but 
\[ C_{\vec{k} = (1,1,2,1)}(q,t) \qquad \text{ and } \qquad C_{\vec{k} =
(1,2,1,1)}(q,t) \] 
are not $q,t$-symmetric (see Figure~\ref{fig:(1,1,2,1)and(1,2,1,1)}).
However, this time the asymmetries present in $C_{\vec{k} = (1,1,2,1)}(q,t)$ and $C_{\vec{k} = (1,2,1,1)}(q,t)$ do cancel out, so $C_{\lambda = (1,1,1,2)}(q,t)$ is $q,t$-symmetric.

\begin{figure}[ht]
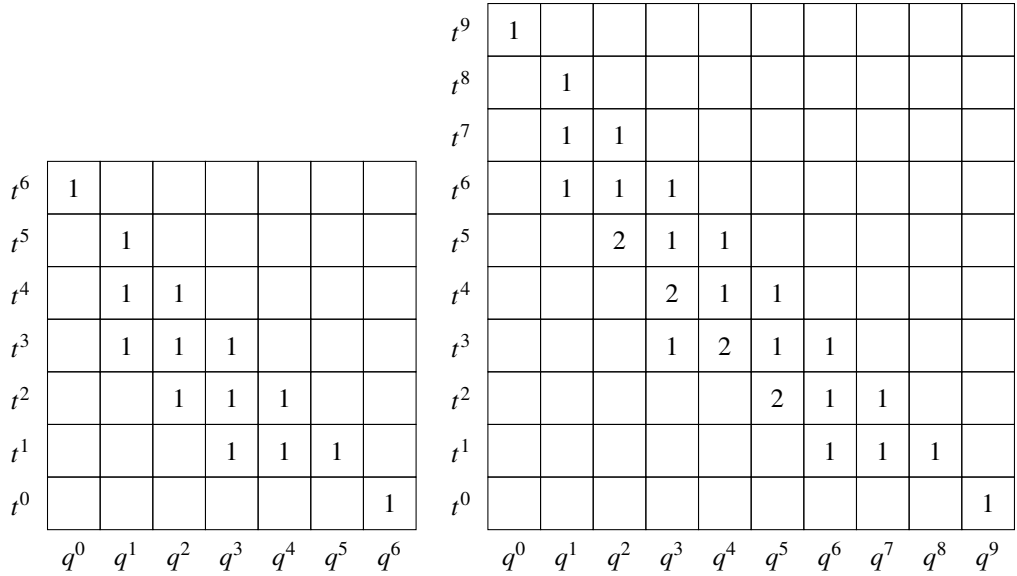

    \centering
    \small
\createTable{6}{1/6/0, 1/5/1, 1/4/2, 1/3/3, 1/2/4, 1/1/5, 1/0/6, 1/4/1, 1/3/2, 1/2/3, 1/1/4, 1/3/1, 1/2/2, 1/1/3, $q^0$/0/-1, $q^1$/1/-1, $q^2$/2/-1, $q^3$/3/-1, $q^4$/4/-1, $q^5$/5/-1, $q^6$/6/-1, $t^0$/-1/0, $t^1$/-1/1, $t^2$/-1/2, $t^3$/-1/3, $t^4$/-1/4, $t^5$/-1/5, $t^6$/-1/6}
\createTable{9}{1/9/0, 1/8/1, 1/7/2, 1/6/3, 1/5/4, 1/4/5, 1/3/6, 1/2/7, 1/1/8, 1/0/9, 1/7/1, 1/6/2, 1/5/3, 1/4/4, 1/3/5, 1/2/6, 1/1/7, 1/6/1, 2/5/2, 2/4/3, 2/3/4, 2/2/5, 1/1/6, 1/3/3, $q^0$/0/-1, $q^1$/1/-1, $q^2$/2/-1, $q^3$/3/-1, $q^4$/4/-1, $q^5$/5/-1, $q^6$/6/-1, $q^7$/7/-1, $q^8$/8/-1, $q^9$/9/-1, $t^0$/-1/0, $t^1$/-1/1, $t^2$/-1/2, $t^3$/-1/3, $t^4$/-1/4, $t^5$/-1/5, $t^6$/-1/6, $t^7$/-1/7, $t^8$/-1/8, $t^9$/-1/9}
    \caption{Coefficients of $C_{\vec{k} = (1,1,1,2)}(q,t)$ (left) and of $C_{\vec{k} = (2,1,1,1)}(q,t)$ (right).
    Both are $q,t$-symmetric. 
    Reading the tables: the lower left box is $(0,0)$, the coefficient in box $(i,j)$ corresponds to the summand $q^i t^j$. }
    \label{fig:(1,1,1,2)and(2,1,1,1)}
\end{figure}

\begin{figure}[ht]
    \centering
        \small
\createTable{7}{1/7/0, 1/6/1, 1/5/2, 1/4/3, 1/3/4, 1/2/5, 1/1/6, 1/0/7, 1/5/1, \textcolor{red}{1}/4/2, 1/3/3, \textcolor{red}{2}/2/4, 1/1/5, 1/4/1, \textcolor{red}{2}/3/2, \textcolor{red}{1}/2/3, 1/1/4, $q^0$/0/-1, $q^1$/1/-1, $q^2$/2/-1, $q^3$/3/-1, $q^4$/4/-1, $q^5$/5/-1, $q^6$/6/-1, $q^7$/7/-1, $t^0$/-1/0, $t^1$/-1/1, $t^2$/-1/2, $t^3$/-1/3, $t^4$/-1/4, $t^5$/-1/5, $t^6$/-1/6, $t^7$/-1/7}
\createTable{8}{1/8/0, 1/7/1, 1/6/2, 1/5/3, 1/4/4, 1/3/5, 1/2/6, 1/1/7, 1/0/8, 1/6/1, 1/5/2, 1/4/3, 1/3/4, 1/2/5, 1/1/6, 1/5/1, \textcolor{red}{2}/4/2, 2/3/3, \textcolor{red}{1}/2/4, 1/1/5, \textcolor{red}{1}/2/3, \textcolor{red}{0}/3/2, $q^0$/0/-1, $q^1$/1/-1, $q^2$/2/-1, $q^3$/3/-1, $q^4$/4/-1, $q^5$/5/-1, $q^6$/6/-1, $q^7$/7/-1, $q^8$/8/-1, $t^0$/-1/0, $t^1$/-1/1, $t^2$/-1/2, $t^3$/-1/3, $t^4$/-1/4, $t^5$/-1/5, $t^6$/-1/6, $t^7$/-1/7, $t^8$/-1/8}
    \caption{Coefficients of $C_{\vec{k} = (1,1,2,1)}(q,t)$ (left) and of $C_{\vec{k} = (1,2,1,1)}(q,t)$ (right). 
    Both are individually not $q,t$-symmetric, with asymmetries highlighted in red.
    Upon combining them, their sum is $q,t$-symmetric.}
    \label{fig:(1,1,2,1)and(1,2,1,1)}
\end{figure}

\subsection{Varying the last parameter}\label{sec:othercases}
We start with a result which shows that the final entry in the vector $\vec{k}$ does not affect its area or bounce.

\begin{proposition}\label{prop: last up step}
    Let 
    \[
    (k_1,\dots,k_j)\in\Z_{>0}^j \quad 
    \text{ and } \quad (a_1,\dots,a_j)\in\Z_{\ge 0}^j
    \]
    be such that the $a_i$ are the number of sequential east steps of a valid $(k_1,\dots,k_j)$-Dyck path (i.e. take $a_1$ east steps after the size $k_1$ north steps, etc.). 
    Let $l,m\in\Z_{>0}$. 
    The bounce and area formulas for the $(k_1,\dots,k_j,l)$-Dyck path and $(k_1,\dots,k_j,m)$-Dyck path with east steps $(a_1,\dots,a_j)$ are identical.
\end{proposition}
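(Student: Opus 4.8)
The plan is to treat area and bounce separately, after recording one structural observation. Since $(k_1,\dots,k_j)$ is a valid $\vec{k}$-vector with east-step data $(a_1,\dots,a_j)$, the underlying path ends on the diagonal at $(n,n)$ with $n=k_1+\cdots+k_j=a_1+\cdots+a_j$. Appending a final north run of length $l$ therefore produces the path obtained by following the original path up to $(n,n)$, then taking $l$ north steps to $(n,n+l)$, and finally $l$ east steps to $(n+l,n+l)$; the same holds with $l$ replaced by $m$. Thus the two paths differ only in the height of this last ``corner'' sitting on the diagonal, and it suffices to show that neither statistic sees that height.

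For area I would compute red ranks directly (Definition~\ref{def:red_ranks}). The first $j$ red ranks $r_1,\dots,r_j$ depend only on $k_1,\dots,k_{j-1}$ and $a_1,\dots,a_{j-1}$, hence are unchanged. The $(j+1)$-st red rank is the rank of the starting point of the last north run, namely $(n,n)$, so $r_{j+1}=n-n=0$ regardless of $l$. Since $\area=\sum_i r_i$, the area is independent of the last entry. This step is essentially a one-line rank computation.

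For bounce I would run Algorithm~\ref{alg: bounce} and exploit that $\bounce(D)=\sum_{i\ge 0} i\,v_i$ equals the sum of the first-row entries of the rank tableau, which has exactly one column per north run. The columns are filled from left to right, so the last column---the only one whose height ($l+1$ versus $m+1$) changes---is filled last, at some step $i^*$. I claim that $i^*$, and all of $v_0,\dots,v_{i^*}$, are independent of the last entry: up to step $i^*$ the last column is unfilled, so it contributes no labeled cells and therefore does not enter any $h_i$ with $i<i^*$; consequently the points $P_0,\dots,P_{i^*}$ and the counts $v_0,\dots,v_{i^*-1}$ agree for both paths, while $v_{i^*}$ counts the final block as a single north run irrespective of its length. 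Once this last column is labeled, every column is filled and $Q_{i^*}=(n,n+l)$ lies on the top row $y=n+l$; hence each later $P_i$ also lies on the top row and has $v_i=0$. Therefore $\bounce(D)=\sum_{i\le i^*} i\,v_i$ is computed entirely from data independent of the last entry, giving equal bounce for the $(k_1,\dots,k_j,l)$- and $(k_1,\dots,k_j,m)$-paths.

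The main obstacle is the bounce bookkeeping: one must verify, from the precise definitions of $v_i$, $h_i$, and $Q_i$, that a longer final run only lengthens the terminal ``climb'' of its column through the increments $h_{i^*},h_{i^*+1},\dots$ and never creates a new vertical run, i.e.\ that every step beyond $i^*$ has $v_i=0$. Pinning down that $Q_{i^*}$ sits at the top-right corner $(n,n+l)$ of the appended block, so that the bounce path thereafter only crawls east along the top row, is the crux; by contrast the area claim is immediate.
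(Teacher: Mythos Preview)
The proposal is correct and takes essentially the same approach as the paper's proof: both handle area via the red ranks and bounce via the observation that the top row of the rank tableau is filled without reference to the height of the final column. Your write-up is a bit more detailed—you explicitly exploit the hypothesis that $(a_1,\dots,a_j)$ comes from a valid $(k_1,\dots,k_j)$-Dyck path to pin down $r_{j+1}=0$ and to locate $Q_{i^*}$ on the top row, and you spell out why $v_i=0$ for $i>i^*$—whereas the paper argues both points more briskly.
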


\begin{proof}
In the course of applying the bounce algorithm~(\ref{alg: bounce}), bounce can be computed as soon as the top row of the rank tableau is complete.
We claim that all the numbers in the top row of the rank tableau are entered without knowledge of the length of the final north step. 

The initial numbers in the top row are entered while the bounce path lies below the final north steps. Eventually we arrive at a point $P_i$ such that traveling north from $P_i$ the next east steps are along the top boundary of the $n\times n$
square containing the Dyck path.
We count $v_i$, the number of  north steps this last leg of the bounce path traverses, and enter this many copies of $i$ in the rank tableau---completing the top row, and arriving at the moment we can compute bounce.  
Therefore, the values of $l$ and $m$ are immaterial to the bounce computation, only the fact that they are non-zero matters.

For either Dyck path we have
 \begin{align*}
     r_1&=k_1-a_1\, , \\
     r_2&=k_1-a_1+k_2-a_2\, . \\
     &\vdots   \\
     r_j&=k_1-a_1+\dots+k_j-a_j\, ,
 \end{align*}
and since area for either path is the sum of these $r_i$, area also does not depend on $l,m$.
\end{proof}

By combining Proposition~\ref{prop: last up step} with the definition of
$C_{\vec{k}}(q,t)$, we obtain the following result. 

\begin{corollary}\label{cor:last_coordinate}
For any positive integers $k_1,\ldots,k_j,m,l$, 
    \[ C_{\vec{k} = (k_1,\ldots,k_j,m)}(q,t) = C_{\vec{k} =
(k_1,\ldots,k_j,l)}(q,t) \, .\]
\end{corollary}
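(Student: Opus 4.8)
The plan is to upgrade Proposition~\ref{prop: last up step} from an equality of statistics on individual paths to an equality of the full generating functions by exhibiting a statistic-preserving bijection between $\mathcal{D}_{(k_1,\ldots,k_j,m)}$ and $\mathcal{D}_{(k_1,\ldots,k_j,l)}$. Since $C_{\vec{k}}(q,t) = \sum_{D} q^{\area(D)} t^{\bounce(D)}$ by definition, it suffices to pair up the paths in the two sets so that corresponding paths agree in both $\area$ and $\bounce$; the monomials will then match term by term and the two sums will be equal.

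The natural pairing records, for a path $D$, the vector $(a_1,\ldots,a_j)$ of consecutive east steps taken after each of the first $j$ north runs — precisely the data appearing in Proposition~\ref{prop: last up step} — and sends $D$ to the path $\Phi(D)$ in the other set having the identical east-step vector. First I would verify that $\Phi$ is well defined, i.e., that a vector $(a_1,\ldots,a_j)$ describes a valid $(k_1,\ldots,k_j,m)$-Dyck path exactly when it describes a valid $(k_1,\ldots,k_j,l)$-Dyck path. The clean observation here is that the final north run only raises the path to height $n$, and the forced trailing east steps travel along the line $y=n$, so they never threaten the condition $y \ge x$. Admissibility of $(a_1,\ldots,a_j)$ is thus governed entirely by the first $j$ runs, where the requirement is that each $r_i$ of Proposition~\ref{prop: last up step}, namely $r_i = (k_1+\cdots+k_i) - (a_1+\cdots+a_i)$, remain nonnegative, together with the condition that the number of trailing east steps $r_j + m$ be positive. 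Since $r_j \ge 0$ and $m,l > 0$, this trailing count is positive for both lengths, so the sets of admissible east-step vectors coincide and $\Phi$ is a bijection whose inverse is the analogous map. Invoking Proposition~\ref{prop: last up step} then gives $\area(D) = \area(\Phi(D))$ and $\bounce(D) = \bounce(\Phi(D))$, and summing $q^{\area} t^{\bounce}$ over each set yields the desired equality of $C_{(k_1,\ldots,k_j,m)}(q,t)$ and $C_{(k_1,\ldots,k_j,l)}(q,t)$.

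The main obstacle is the well-definedness check rather than the statistic computation, which Proposition~\ref{prop: last up step} already supplies. One must confirm that no constraint defining a valid $\vec{k}$-Dyck path secretly depends on the length of the last north run: the diagonal condition must be shown to be controlled solely by the first $j$ runs, and the reachability of $(n,n)$ must be seen to hold automatically once $m$ (respectively $l$) is positive. Once this bookkeeping is in place, the corollary follows immediately by pushing the bijection through the sum defining $C_{\vec{k}}(q,t)$.
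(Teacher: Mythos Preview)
Your proof is correct and follows essentially the same approach as the paper: both construct a bijection between $\mathcal{D}_{(k_1,\ldots,k_j,m)}$ and $\mathcal{D}_{(k_1,\ldots,k_j,l)}$ by fixing the east-step vector $(a_1,\ldots,a_j)$ and adjusting only the final north run, then invoke Proposition~\ref{prop: last up step} to conclude that area and bounce are preserved. The only cosmetic difference is that the paper phrases the bijection concretely (assume $m<l$, then append or delete $l-m$ north and east steps at the end, checking that at least $l$ east steps follow the final north run), whereas you characterize the admissible east-step vectors intrinsically via $r_i\ge 0$ and observe this set is independent of the final parameter; these are two ways of verifying the same well-definedness claim.
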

\begin{proof}
Without loss of generality, assume $m<l$. 
Given a $(k_1,\dots,k_j,m)$-Dyck path, we can add $l-m$ units to the final north step, and $l-m$ east steps after the final east step.  
The result is a valid $(k_1,\dots,k_j,l)$-Dyck path.  
    
On the other hand, given a $(k_1,\dots,k_j,l)$-Dyck path, we can subtract $l-m$ units from the final north step. 
We need to delete $l-m$ east steps from the end in order to create a valid $(k_1,\dots,k_j,m)$-Dyck path. 
The total number of east steps that can occur in the original $(k_1,\dots,k_j,l)$-Dyck path prior to the final north step is $k_1+\dots+k_j$, and the final total is $k_1+\dots+k_j+l$, so at least $l$ east steps must occur after the final north step.  
Therefore, we can delete $l-m$ east steps from the end and still have a valid Dyck path.

We have described a bijection on $(k_1,\dots,k_j,m)$- and $(k_1,\dots,k_j,l)$-Dyck paths, and by Proposition~\ref{prop: last up step} this bijection preserves area and bounce, proving the claim.
\end{proof}

\begin{remark}
We can see Corollary~\ref{cor:last_coordinate} in action in the following examples:
\begin{itemize}
    \item For $m\in\Z_{>0}$ the bounce and area formulas for $(k,k,k,m)$-Dyck paths are the same as the bounce and area formulas for $(k,k,k,k)$-Dyck paths. 
    \item The formula for all $(k_1,k_2,m)$-Dyck paths are also the same regardless of $m\in\Z_{>0}$, the formula for bounce in the $(k_1,k_2,k_3)$-Dyck path case does not depend on $k_3$.
    \item Similarly, Niu points out in \cite{Niu} that the formula for $(k,k,k,k-1)$-Dyck paths is the same as that of $(k,k,k,k)$-Dyck paths.  
    \item If Conjecture~\ref{conj:k-symmetry} holds, Corollary~\ref{cor:last_coordinate} implies that for positive integers $k,a,l$, that $C_{\vec{k}}(q,t)$ with $\vec{k} = (k,a,\ldots,a,l)$ is also $q,t$-symmetric. 
\end{itemize}
Note Corollary~\ref{cor:last_coordinate} does not allow us to extend the bounce formula for a $(k_1,\ldots,k_m)$-Dyck path to a bounce formula for a $(k_1,\ldots,k_m,k_{m+1})$-Dyck path; the fact that we have an additional north step of size $k_{m+1}$ significantly changes the computation. 
\end{remark}

\subsection{Other generalized \texorpdfstring{$q,t$}{q,t}-Catalan numbers}\label{sec:terminology}

We briefly discuss the relationship between Xin-Zhang's refined $q,t$-Catalan numbers and other generalizations which have appeared in the literature. 
We already saw that with $\vec{k} = (\underbrace{1,\ldots,1}_n)$, we recover the classical $q,t$-Catalan numbers $C_n(q,t)$. 
Taking $\vec{k} = (\underbrace{m,\ldots,m}_n)$, we recover the higher $q,t$-Catalan numbers $C_n^{(m)}(q,t)$ \cite{GarsiaHaiman}, which are a special case of the rational $q,t$-Catalan numbers. 
Combinatorial formulas for $C_n^{(m)}(q,t)$ were conjectured by Haglund et al. \cite{HHLRU} and Loehr \cite{Loehr}, and proven as a consequence of the $(m,n)$-shuffle theorem, conjectured by Haglund et al. \cite{HHLRU}, refined by Bergeron et al. \cite{BGLX}, and proven by Mellit \cite{Mellit}.
Note that in both of these cases, there is no difference between $C_{\vec{k}}(q,t)$ and $C_{\lambda}(q,t)$ since there is only one unique rearrangement of the vector. 

In 2020, Gorsky et al. \cite{GorskyHawkesSchillingRainbolt} introduced a
generalized $q,t$-Catalan number, which was further studied by Blasiak et al.
\cite{BHMPS}, where it is defined as
\[C_\textbf{b} := \langle s_{(|\textbf{b}|)}, \omega(D_\textbf{b} \cdot
1)\rangle \, ,\]
$s_{(|\textbf{b}|)}$ is a single-row Schur function, $\omega$ is an involution on symmetric functions, and $D_\mathbf{b}$ is a certain operator in the Schiffmann algebra introduced by Negut. 
It is interesting to speculate on the relationship between $C_\textbf{b}$ and the $q,t$-Catalan numbers of Xin-Zhang, since they depend on a vector of integers, or positive integers, respectively, and generalize the higher $q,t$-Catalan numbers.
For example, the generalized $q,t$-Catalan number 
\[ C_{\mathbf{b} = (1,2,1,0)} = q^4 + q^3t + q^2t^2 + qt^3 + t^4 + q^2t + qt^2
\] 
equals the refined $q,t$-Catalan number 
\[ C_{\vec{k} = (1,2,1)} = q^4 + q^3t + q^2t^2 + qt^3 + t^4 + q^2t + qt^2 \] 
(and neither is a higher $q,t$-Catalan number).
Furthermore, Gorsky et al.\ showed that the generalized $q,t$-Catalan number does not depend on $b_1$, the first entry of the vector $\mathbf{b}$.
This is similar to our remark in Section~\ref{sec:othercases} that the refined $q,t$-Catalan number does not depend on the final entry of $\vec{k}$ (the way the diagrams are drawn is flipped between these settings).

This might suggest that $C_{\mathbf{b}}(q,t)$ and $C_{\vec{k}}(q,t)$ are equal when $\mathbf{b}  = (b_1, \ldots, b_\ell, 0) = (b_\ell, \ldots, b_1) = \vec{k}$, however, the following example shows that, in general, this is not the case: 
\[ C_{\mathbf{b} = (2,1,2,2,0)}(q,t) - C_{\vec{k} = (2,2,1,2)}(q,t) = q^2t^7 +
q^4t^4 + q^3t^5 - q^4t^3 - q^3t^4 - q^2t^5 \, . \]
Perhaps understanding when $C_{\mathbf{b}}(q,t) = C_{\vec{k}}(q,t)$ would allow for some progress on a conjecture of Negut \cite{GorskyHawkesSchillingRainbolt} (and extended by Blasiak et al. \cite[Conjecture 7.1.1]{BHMPS}).

Gorsky et al. \cite[Remark 2.4]{GorskyHawkesSchillingRainbolt} give an example where the coefficients of the generalized $q,t$-Catalan number $C_\textbf{b}$ are negative.
Some of Xin-Zhang's $q,t$-Catalan numbers are not $q,t$-symmetric, while all of the generalized $q,t$-Catalan numbers are $q,t$-symmetric by construction from Negut's operator (see \cite[Equation~(3)]{BHMPS}). 
As a result, neither generalization is contained within the other. 


\section*{Acknowledgements \& Funding}
This work was initiated at the 2023 GEMS of Combinatorics Workshop held at the American Institute of Mathematics (AIM). 
The authors thank the workshop organizers and AIM for fostering a fruitful research atmosphere. 
We thank Carolina Benedetti, Laura Colmenarejo, and Sophie Rehberg for conversations at the start of the project.
We also thank Mark Haiman for helpful conversations.
Lentfer is supported by the National Science Foundation Graduate Research Fellowship DGE-2146752. 
Vindas-Mel\'{e}ndez was partially supported by the National Science Foundation under Award DMS-2102921. 
Waddle is partially supported by the National Science Foundation RTG grant DMS-1840234.

\bibliographystyle{amsplain}
\bibliography{references}


\end{document}